\newcommand{\guio}[1]{\nobreakdash-\hspace{0pt}}
\numberwithin{equation}{section}
\newtheorem{theorem}{Theorem}[section]
\newtheorem{lemma}[theorem]{Lemma}
\newtheorem{prop}[theorem]{Proposition}
\theoremstyle{definition}
\newtheorem{remark}[theorem]{Remark}
\newcommand{\R}{\mathbb{R}}
\newcommand{\C}{\mathbb{C}}
\newcommand{\Z}{\mathbb{Z}}
\renewcommand{\S}{\mathbb{S}}
\newcommand{\ep}{\varepsilon}
\newcommand{\supp}{\operatorname{supp}}
\newcommand{\Rd}{r_d}
\title[Explicit minimisers for anisotropic Riesz energies]
{Explicit minimisers for anisotropic \\ Riesz energies
}
\author[R.L.~Frank]{R.L.~FRank}
\author[J.~Mateu]{J.~Mateu}
\author[M.G.~Mora]{M.G.~Mora}
\author[L.~Rondi]{L.~Rondi}
\author[L.~Scardia]{L.~Scardia}
\author[J.~Verdera]{J.~Verdera}
\address[R.L. Frank]{Mathematisches Institut, Ludwig-Maximilians Universit\"at M\"unchen, Germany, and Munich Center for Quantum Science and Technology, M\"unchen, Germany, and Department of Mathematics, Caltech, Pasadena, CA, USA}
\email{r.frank@lmu.de}
\address[J. Mateu]{Department de Matem\`atiques, Universitat Aut\`onoma de Barcelona, and Centre de Recerca Matem\`atica, Catalonia}
\email{Joan.Mateu@uab.cat}
\address[M.G. Mora]{Dipartimento di Matematica, Universit\`a di Pavia, Italy}
\email{mariagiovanna.mora@unipv.it}
\address[L. Rondi]{Dipartimento di Matematica, Universit\`a di Pavia, Italy}
\email{luca.rondi@unipv.it}
\address[L. Scardia]{Department of Mathematics, Heriot-Watt University, United Kingdom}
\email{L.Scardia@hw.ac.uk}
\address[J. Verdera]{Department de Matem\`atiques, Universitat Aut\`onoma de Barcelona, and Centre de Recerca Matem\`atica, Catalonia}
\email{Joan.Verdera@uab.cat}
\date{}
\begin{document}

\begin{abstract}

In this paper we characterise the energy minimisers of a class of nonlocal interaction energies where the attraction is quadratic, and the repulsion is Riesz-like and anisotropic. In particular we show that, if the Fourier transform of the repulsive potential is positive, the minimiser is supported on a fully-dimensional ellipsoid, and its density is given by a Barenblatt-type profile. Our technique of proof is based on a Fourier representation of the potential of such measures, that extends a previous formula established by some of the authors in the Coulomb case.
\bigskip

\noindent\textbf{AMS 2010 Mathematics Subject Classification:}  31A15 (primary); 49K20 (secondary)

\medskip

\noindent \textbf{Keywords:} nonlocal energy, potential theory, anisotropic interaction, Riesz potential
\end{abstract}

\maketitle

\section{Introduction}

Motivated by applications in physics, biology and economics, in this paper we study a mean-field model of particles (or agents) interacting through a repulsive, anisotropic Riesz potential in a quadratic confinement. More precisely, we consider nonlocal energies of the form
\begin{equation}\label{en:general}
I(\mu):= \int_{\R^d} (W\ast\mu)(x) \,d\mu(x) +\int_{\R^d}|x|^2 \,d\mu(x),
\end{equation}
defined on probability measures $\mu\in\mathcal{P}(\R^d)$, for $d\geq 2$. In \eqref{en:general} the potential $W$ is of the form 
\begin{equation}\label{potdef}
W(x):=\frac1{|x|^s}\Psi\left(\frac{x}{|x|}\right),\quad s\in (0,d),
\end{equation}
for $x\neq 0$ and $W(0)=+\infty$, and the profile $\Psi$ is
even, strictly positive on $\mathbb{S}^{d-1}$, and such that both $W$ and $\widehat{W}$ are continuous on $\mathbb{S}^{d-1}$. 
Here $\widehat W$ denotes the Fourier transform of $W$, see Section~\ref{FTW:sect}.
Explicit examples of anisotropic potentials of the form above can be found in \cite{CMMRSV2,Giorgio,MRS} in the case $s=d-2$, and are used to model interactions of defects in metals, see \cite{HL}.
We also observe that small perturbations of the isotropic Riesz potential - namely, kernels of the form \eqref{potdef} with a profile $\Psi=1+\ep\Phi$, where $\Phi$ is even and smooth and $\ep$ is sufficiently small - automatically satisfy our assumptions (see \cite{MMRSV-st}).

Alternatively, one could consider the fully nonlocal analogue of \eqref{en:general},
$$
\widetilde I(\mu) := \int_{\R^d} (\widetilde W\ast\mu)(x)\,d\mu(x),
$$
where the attractive/repulsive interaction potential $\widetilde W$ is given by $\widetilde W(x):=W(x) +\frac12|x|^2$. In fact, by translation invariance one can reduce the minimisation of $\widetilde I$ to probability measures $\mu$
with compact support and with $\int_{\R^d} x\,d\mu(x)=0$, and for such measures the two functionals coincide.
In what follows, we find it more convenient to work with $I$ rather than $\widetilde I$ and will focus on the formulation \eqref{en:general}.

We observe that for a constant profile $\Psi$, which without loss of generality can be assumed to be $\Psi\equiv 1$, the potential $W$ reduces to the classical (radially symmetric) Riesz potential. In this case it was proved in \cite{CaSh1} that for $s\in (\max\{d-4,0\},d)$ the unique minimiser of $I$ over $\mathcal{P}(\R^d)$ is given by the Barenblatt measure
\begin{equation}\label{intro:explicit}
\mu_{\text{iso},d}(x)= c_d\,(1-|x/r_d|^2)^{\frac{s+2-d}2}\chi_{\Rd \overline{B}}(x),
\end{equation}
where $c_d>0$ and $r_d>0$ are explicit constants depending on $d$, $s$, $B=B_1(0)$ is the unit ball, $\chi_{\Rd \overline{B}}$ denotes the characteristic function of $\Rd \overline{B}$, and where we identified the measure with its density $\mu_{\text{iso},d}\in L^1(\R^d)$ (see also \cite{CaVa}). For $0<s\leq d-4$, instead, it was proved in  \cite{FrankMatzke2023} that the unique minimiser is given by the uniform probability measure on a sphere whose radius depends on $d$.
Note that in some of these previous works the equivalent minimisation problem for $\widetilde I$ was considered. The values of the constants $c_d$ and $r_d$ can be extracted from \cite{FrankMatzke2023}. For a related result in dimension $d=1$, see \cite{Fr0}.

In the present paper we are interested in the \emph{anisotropic case} where $\Psi$ is not necessarily constant. We show that for dimension $d=3$ and for the full range $s\in (0,3)$, if $\widehat W>0$, the unique minimiser $\mu_{\text{min}}$ of the energy $I$ over $\mathcal{P}(\R^d)$ is supported on a fully-dimensional ellipsoid $E\subset \R^3$, and its density is given by a Barenblatt-type profile.
Roughly speaking, as long as $\widehat W>0$, the effect of any anisotropic potential of the form \eqref{potdef} on the minimiser is simply that of `deforming' the support of \eqref{intro:explicit}
from a ball into a suitable ellipsoid. Our proof extends to higher dimensions for a
partial range of Riesz homogeneities.
To be precise, our main result reads as follows.

\begin{theorem}\label{chara}
Let $s\in [d-3,d)\cap (0,5]$, and let $W$ be as in \eqref{potdef} with $\Psi$ even. Assume that $W$ and $\widehat{W}$  are strictly positive and continuous on $\mathbb{S}^{d-1}$. Then there exists a unique minimiser $\mu_0$ of $I$ over $\mathcal{P}(\R^d)$. It is given by the push-forward of the measure \eqref{intro:explicit} through the function $x\mapsto RD(a/r_d)x$, for some rotation $R\in SO(d)$ and 
some $a\in \R^d$ with $a_i=a\cdot e_i>0$. More precisely,
\begin{equation}\label{thm:min}
\mu_0(x)=  \frac{c_d}{\prod_{j=1}^d (a_j/r_d)}\left(1-\left|D\big(\tfrac1a\big)R^Tx\right|^2\right)^{\frac{s+2-d}2}\chi_{ E}(x),
\end{equation}
where $E$ is the ellipsoid $R D(a) \overline{B}$, and $c_d$ and $r_d$ are the constants from \eqref{intro:explicit}.
\end{theorem}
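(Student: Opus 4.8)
The plan is to use the strict convexity of $I$ to reduce the theorem to exhibiting \emph{one} probability measure of the claimed form satisfying the Euler--Lagrange conditions, and then to produce it by minimising $I$ over the finite‑dimensional family of ellipsoidal Barenblatt profiles. Since $W\ge0$ is lower semicontinuous and $\int|x|^2\,d\mu$ is coercive, the direct method gives a minimiser. Because $\widehat W>0$ on $\R^d\setminus\{0\}$, for any admissible difference $\rho=\mu_1-\mu_0$ of probability measures one has $\iint W(x-y)\,d\rho(x)\,d\rho(y)$ equal to a positive multiple of $\int_{\R^d}\widehat W\,|\widehat\rho|^2$, which is $>0$ unless $\rho=0$ (here $s\ge d-3$, i.e.\ $\tfrac{s+2-d}{2}\ge-\tfrac12$, is used to keep the relevant measures in a class where these Fourier manipulations are licit); hence $I$ is strictly convex along segments and has a unique minimiser $\mu_0$, characterised by the existence of $C\in\R$ with
\[
2(W\ast\mu_0)(x)+|x|^2\ge C\ \ \text{q.e.\ on }\R^d,\qquad 2(W\ast\mu_0)(x)+|x|^2=C\ \ \text{on }\supp\mu_0,
\]
and conversely any probability measure with these two properties equals $\mu_0$. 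It therefore suffices to construct such a measure among the $(RD(a/r_d))_\#\mu_{\text{iso},d}$.

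For $G$ symmetric positive‑definite put $\bar\mu_G:=(G^{1/2})_\#\mu_{\text{iso},d}$, supported on the ellipsoid $E_G:=r_dG^{1/2}\overline B$; these are exactly the measures \eqref{thm:min}, with $G^{1/2}=RD(a/r_d)$ and $R$, $a$ the spectral data of $G$. A linear change of variables gives $(W\ast\bar\mu_G)(G^{1/2}w)=(W_M\ast\mu_{\text{iso},d})(w)$ for all $w$, with $W_M(v):=W(Mv)$, $M:=G^{1/2}$, again of the form \eqref{potdef} (even, positive, continuous profile, positive Fourier transform). I would then prove that for \emph{any} such kernel $V(v)=|v|^{-s}\chi(v/|v|)$ the potential $V\ast\mu_{\text{iso},d}$ is a quadratic polynomial on $r_d\overline B$: expanding $\chi=\sum_{k\ge0}\chi_{2k}$ into spherical harmonics, the term $k=0$ is the classical isotropic identity $|v|^{-s}\ast\mu_{\text{iso},d}=A-\beta|v|^2$ on $r_d\overline B$, $\beta>0$ (valid since $d-4<s<d$); for $k\ge1$, writing $|v|^{-s}\chi_{2k}(v/|v|)=c\,P_{2k}(\partial)|v|^{2k-s}$ with $P_{2k}$ the associated harmonic polynomial, the convolution equals $c\,P_{2k}(\partial)\big[|v|^{2k-s}\ast\mu_{\text{iso},d}\big]$, and since $|v|^{2k-s}\ast\mu_{\text{iso},d}$ is radial with $k$‑th Laplacian proportional to $|v|^{-s}\ast\mu_{\text{iso},d}$, it coincides on $r_d\overline B$ with a radial polynomial of degree $2(k+1)$, which $P_{2k}(\partial)$ annihilates when $k\ge2$ and turns into a harmonic quadratic when $k=1$. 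Summing over $k$ (with care for the convergence of the expansion of $\Psi$ and for the exceptional exponents $s\in\{2,4\}$) yields $2(W\ast\bar\mu_G)(x)+|x|^2=(\text{const})-\langle\mathbb A(G)x,x\rangle$ on $E_G$, with $\mathbb A(G)$ symmetric and continuous in $G$, depending only on the degree‑$\le2$ harmonic part of the profile of $W_{G^{1/2}}$. Note that this step uses the Riesz structure of $W$ but not $\widehat W>0$.

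Now minimise $\mathcal I(G):=I(\bar\mu_G)$ over $G$ symmetric positive‑definite. Using that $\Psi$ is bounded above and below, the confinement, and the cited isotropic results (for $s\ge d-3>d-4$ the isotropic minimiser is full‑dimensional), a minimiser $G_*$ exists and is non‑degenerate, hence an interior critical point. Differentiating along $G_t:=G_*+t\,\delta G$, i.e.\ $\bar\mu_{G_t}=(M_t)_\#\mu_{\text{iso},d}$ with $\dot M M_*+M_*\dot M=\delta G$, $M_*=G_*^{1/2}$, and using the $C^1$ matching of the potential across $\partial E_{G_*}$ together with the previous paragraph, one computes
\[
\frac{d}{dt}\Big|_{t=0}\mathcal I(G_t)=-2\sigma^2\,\mathrm{tr}\big(\dot M\,\mathbb A(G_*)\,G_*^{1/2}\big),\qquad \sigma^2:=\tfrac1d\!\int|w|^2\,d\mu_{\text{iso},d}(w).
\]
As $\delta G$ ranges over all symmetric matrices, so does $\dot M$, hence the symmetric part of $\mathbb A(G_*)G_*^{1/2}$ vanishes; since $G_*^{1/2}$ is positive definite the Lyapunov equation $\mathbb A(G_*)G_*^{1/2}+G_*^{1/2}\mathbb A(G_*)=0$ forces $\mathbb A(G_*)=0$. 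Thus $2(W\ast\bar\mu_{G_*})(x)+|x|^2$ is constant on $E_{G_*}$, so $\bar\mu:=\bar\mu_{G_*}$ satisfies the equality part of the Euler--Lagrange conditions; writing $G_*=RD(a^2/r_d^2)R^T$ exhibits $\bar\mu$ as the measure \eqref{thm:min} with $a_i>0$.

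Finally one must check $h:=2(W\ast\bar\mu)+|x|^2-C\ge0$ on $\R^d$, knowing $h=0$ on $E_{G_*}$, $h$ continuous, $h\to+\infty$ at infinity. Since $W\ast\bar\mu$ is annihilated off $E_{G_*}$ by the nonlocal operator with Fourier symbol $1/\widehat W$ (so $h$ is superharmonic for it there, the $|x|^2$ term contributing nonnegatively on $E_{G_*}^c$), I expect a comparison/maximum‑principle argument — using $\widehat W>0$ and adapting the tail estimates for the isotropic potential in \cite{CaSh1,FrankMatzke2023} — to give $h\ge0$; then $\bar\mu$ satisfies both Euler--Lagrange conditions and equals $\mu_0$ by the first paragraph, and uniqueness has already been established. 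The main obstacle is precisely this last step: controlling the anisotropic potential outside the ellipsoid and proving the required maximum principle (this is where $\widehat W>0$ is used most essentially, and where, I expect, the technical constraint $s\le5$ enters); the other delicate points are the vanishing, in the second paragraph, of all degree‑$\ge4$ spherical‑harmonic contributions to $W\ast\mu_{\text{iso},d}$ on $r_d\overline B$, and the exclusion of degenerate minimising ellipsoids in the third.
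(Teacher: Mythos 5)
Your overall skeleton (uniqueness via strict convexity from $\widehat W>0$, reduction to the Euler--Lagrange conditions, construction of an ellipsoidal Barenblatt candidate) matches the paper, and two of your ingredients are genuinely different routes to results the paper also needs: (i) you derive the quadraticity of $V\ast\mu_{\mathrm{iso},d}$ inside the ball by a spherical-harmonics/Hecke--Bochner expansion, where the paper instead computes the full Fourier representation of $W\ast\mu_E$ via the Bessel--hypergeometric identity \eqref{cosFIs} (which, unlike your argument, also yields an explicit formula \emph{outside} $E$ -- this is what is needed later); and (ii) you locate the correct ellipsoid by minimising $G\mapsto I(\bar\mu_G)$ over positive-definite matrices and using first-order criticality plus a Lyapunov argument, where the paper solves the matrix equation \eqref{EL1-M} by a homotopy in $t$ combined with the Implicit Function Theorem and a compactness/closedness argument. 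Your variational route is attractive and the Lyapunov step is correct, but you have swept the hard point under the rug: the non-degeneracy of the minimising $G_*$. For $s<d-1$ a measure supported on a lower-dimensional ellipsoid has \emph{finite} interaction energy (see the paper's Section~4), so one cannot exclude degeneration of minimising sequences $G_n$ by an energy blow-up alone; this is exactly where the hypothesis $s\ge d-3$ must enter (in the paper it appears as the divergence of $\int_{\S^{d-1}}(\sum_{i=1}^{d-1}\omega_i^2)^{-s/2-1}\,d\mathcal H^{d-1}$ in the closedness step), and your proposal contains no argument for it.

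The second, and more serious, gap is the inequality $P\ge P(0)$ on $E^c$, which you yourself flag as the main obstacle. The comparison/maximum-principle strategy you sketch is not available here: the nonlocal operator with symbol $1/\widehat W=|\xi|^{d-s}/\widehat\Psi(\xi/|\xi|)$ is not, for general positive continuous $\widehat\Psi$, the generator of a positivity-preserving semigroup, so no maximum principle holds off the shelf (this is precisely why the isotropic arguments of \cite{CaSh1,FrankMatzke2023} do not transfer). The paper's resolution is entirely different and constitutes the technical core: the explicit exterior representation \eqref{potential} writes $P(x)-P(0)$ for $x\in E^c$ as $\int_{\S^{d-1}}g_s(\omega)\,F(\alpha(x,\omega),s)\,\chi_{[-1,1]^c}(\alpha(x,\omega))\,d\mathcal H^{d-1}(\omega)$ with $g_s\ge0$ (this is where $\widehat W\ge0$ is used), and the scalar inequality $F(\alpha,s)\ge0$ is then proved via monotonicity and convexity properties of $z\mapsto z^{-a}{}_2F_1(a,b;c;z^{-1})$ (Lemma~\ref{hyper}) together with Gamma-function identities. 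Note also that the restriction $s\le5$ does not enter where you guess: it is forced already at the level of the Fourier representation (the Bessel integral \eqref{cosFIs} holds only for $0<s<5$, with $s\in[3,5]$ requiring an additional regularisation $\Phi_\lambda$ and $s=5$ a separate distributional computation), so both Euler--Lagrange conditions, not just the second, depend on machinery your proposal does not supply.
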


In the statement above $D(a)$ denotes the diagonal matrix such that $(D(a))_{ii}=a_i$, and $D\big(\tfrac1a\big)$ denotes the diagonal matrix such that $(D(\tfrac 1a))_{ii}=1/a_i$.

In the two-dimensional case the result of Theorem~\ref{chara} has been proved by Carrillo and Shu in \cite{CS2d}. They also considered the three-dimensional case in  \cite{CS3d}, but only under additional symmetry assumptions on the potential, which essentially made the problem two-dimensional. In \cite{MMRSV-3d} a new strategy of proof was developed, which allowed to successfully remove the additional assumptions in \cite{CS3d} in the special case of Coulomb singularity $s=d-2$ in three dimensions (corresponding to $s=1$). In \cite{Mora-Muenster} this method was adapted to the case of logarithmic interactions in two dimensions, which correspond loosely speaking to `$s=0$'.

In the paper \cite{CS3d} Carrillo and Shu raised the question of whether the methods of \cite{MMRSV-3d, Mora-Muenster} could be extended, in the three-dimensional case, to the full range $s\in (0,3)$ of homogeneity, beyond the Coulomb singularity $s=1$. In this paper we give a positive answer to that question, and characterise the minimisers in the full range $s\in (0,3)$, without the additional symmetry assumptions on $\Psi$ required in \cite{CS3d}. Moreover, since the range $[d-3,d)\cap(0,5]$ considered here includes the Coulomb exponent $s=d-2$ for $3\leq d\leq 7$, our result extends those of \cite{MMRSV-3d, Mora-Muenster} to dimension $d\leq 7$.

The strategy of proof of our main result consists in showing that there exist a rotation $R\in SO(d)$ and a diagonal matrix $D(a)$ with positive entries such that the candidate 
$\mu_{\text{min}}$, that is, the push-forward of $\mu_{\text{iso},d}$ through the function $x\mapsto RD(a/r_d)x$, satisfies the Euler-Lagrange conditions 
\begin{align}\label{EL1-intro}
\left(W\ast \mu_{\text{min}}  \right) (x)+ \frac{1}{2}|x|^2&= C \quad \text{for } x \in  E,\\
\left(W\ast \mu_{\text{min}}  \right) (x)+ \frac{1}{2}|x|^2&\ge C \quad \text{for }x \in \R^d, \label{EL2-intro}
\end{align}
where $C$ is a constant, and $E$ is the ellipsoid $R D(a)\overline{B}$.
To this aim we follow the methodology developed in our paper \cite{MMRSV-3d} which requires, as a first step, to write the potential $W\ast \mu_{\text{min}}$ in Fourier space. This computation is considerably more involved than in the Coulomb case, since the Fourier transform of $\mu_{\text{min}}$ involves Bessel and hypergeometric functions, and only works for  $s\in (d-4,d)\cap(0,5]$, where the further restriction $s\in (0,5]$ is needed to deal with the lack of integrability of $\widehat{W\ast \mu_{\text{min}}}$. In the isotropic case, similar computations can be found in \cite{BIK2015-ARMA}, for the derivation of self-similar profiles of the nonlocal porous medium equation, and in \cite{FrankMatzke2023}, for the characterisation of minimisers of attractive-repulsive energies, again in the isotropic framework. Once \eqref{EL1-intro}--\eqref{EL2-intro} are written in Fourier terms, we proceed as follows.

For the proof of \eqref{EL1-intro} we introduce a family $W_t$ of potentials interpolating between the isotropic case (corresponding to $t=0$) and $W$ in \eqref{potdef} (corresponding to $t=1$), with $\widehat{W_t}>0$ for every $t\in [0,1]$. We then resort to a continuity argument on the parameter $t$ to show that the set $T\subset [0,1]$ where equation \eqref{EL1-intro}, with $W$ replaced by $W_t$, is satisfied is nonempty, open and closed in $[0,1]$. This implies that $T=[0,1]$ and concludes the proof of \eqref{EL1-intro}. It is at this step of the proof that the restriction $s\geq d-3$ becomes necessary. It is not clear whether this is a technical condition due to the argument of proof or in fact the existence of an ellipsoid satisfying \eqref{EL1-intro} may fail for $s\in(d-4, d-3)$.
Finally we show that whenever a measure of the form \eqref{intro:explicit} is a solution of  \eqref{EL1-intro}, it also satisfies \eqref{EL2-intro}. While in \cite{MMRSV-3d} this step is immediate, care is needed in the present case, due to the extra parameter $s$ and to the more complicated nature of the candidate minimiser.

In Section~\ref{lossdimsec} we briefly discuss the degenerate case $\widehat{W}\geq 0$, and show that the energy minimiser may be supported on a lower-dimensional set. In particular, in dimension $d=3$, we have the following cases, depending on the strength of the singularity of the potential at the origin. For $s\in (0,1)$ the minimiser must be supported on a set of dimension at least one, and the support may be a segment, an ellipse, or an ellipsoid. For $s\in [1,2)$ the dimension of the support must be at least 2, so the segment is excluded. For $s\in [2,3)$ the minimiser is fully dimensional. Explicit examples, see, e.g., \cite[Example 3.4]{MMRSV-3d} for the case of Coulomb interactions $s=1$, show that the loss of dimensionality of the minimiser can in fact occur.  

\section{Preliminaries} 
In this section we collect some definitions and preliminary results that will be needed in the paper. We also establish some notation.

\subsection{Existence and uniqueness of a minimiser} 
In Proposition~\ref{ex+uniq} we prove existence and uniqueness of a minimiser of the energy $I$, and show that it is characterised by the Euler-Lagrange conditions for $I$. This is the first step of the proof of Theorem~\ref{chara}, and is quite straightforward. Section~\ref{sec:main-res} will be devoted to the proof of the main part of Theorem~\ref{chara}, where we show that there exists an ellipsoid $E$ such that the corresponding measure \eqref{thm:min} is the unique minimiser of the energy.

\begin{prop}\label{ex+uniq}
Let $s\in (0,d)$, and let $W$ be as in \eqref{potdef} with $\Psi$ even, strictly positive on $\mathbb{S}^{d-1}$, and such that $W$ and $\widehat{W}$ are continuous on $\mathbb{S}^{d-1}$. 
Assume that $\widehat{W}\geq 0$ on $\mathbb{S}^{d-1}$.
Then there exists a unique minimiser $\mu_0$ of $I$ over $\mathcal{P}(\R^d)$. It is characterised by the following Euler-Lagrange conditions:
\begin{align}\label{EL1}
\left(W\ast \mu_0  \right) (x)+ \frac{1}{2}|x|^2&= C \quad \text{for }\mu_0\text{-a.e. } x \in \supp\mu_0,\\
\left(W\ast \mu_0  \right) (x)+ \frac{1}{2}|x|^2&\ge C \quad \text{for }x \in \R^d \setminus N \text{ with } \operatorname{Cap}_{d-s}(N)=0, \label{EL2}
\end{align}
where $\supp\mu_0$ stands for the support of $\mu_0$, $C$ is a constant, and $\operatorname{Cap}_{d-s}$ is the $(d-s)$-Riesz capacity defined as in \cite{Land}.
\end{prop}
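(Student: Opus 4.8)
The plan is to combine the classical direct method of the calculus of variations (for existence) with the fact that, under the hypothesis $\widehat W\ge 0$, the interaction energy is conditionally positive definite: this yields both the convexity of $I$ along segments in $\mathcal P(\R^d)$ (which gives the Euler–Lagrange characterisation) and, together with an analyticity argument for Fourier transforms of compactly supported measures, uniqueness. Throughout write $U_\mu:=W\ast\mu+\tfrac{1}{2}|x|^2$, $Q(\alpha):=\iint_{\R^{2d}}W(x-y)\,d\alpha(x)\,d\alpha(y)$, and $B$ for the bilinear form associated with $Q$.

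\emph{Existence.} Put $c_0:=\min_{\S^{d-1}}\Psi>0$ and $c_1:=\max_{\S^{d-1}}\Psi<\infty$, so that $c_0|x|^{-s}\le W(x)\le c_1|x|^{-s}$; in particular $W\ge0$, hence $I\ge0$, and $W\in L^1_{\mathrm{loc}}(\R^d)$ since $s<d$, so testing $I$ against any measure with bounded, compactly supported density shows $m:=\inf_{\mathcal P(\R^d)}I<\infty$. Along a minimising sequence $(\mu_n)$, the bound $\int_{\R^d}|x|^2\,d\mu_n\le I(\mu_n)$ forces tightness, hence by Prokhorov there is a subsequence with $\mu_n\rightharpoonup\mu_0$ narrowly and $\mu_0\in\mathcal P(\R^d)$. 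The confinement term is narrowly lower semicontinuous (continuous nonnegative integrand), and since $W$ is lower semicontinuous on $\R^d$ (continuous off the origin, $+\infty$ at $0$) and bounded below, so is $\mu\mapsto\iint W(x-y)\,d\mu(x)\,d\mu(y)$, applied to $\mu_n\otimes\mu_n\rightharpoonup\mu_0\otimes\mu_0$ on $\R^{2d}$. Therefore $I(\mu_0)\le\liminf_n I(\mu_n)=m$, and $\mu_0$ is a minimiser.

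\emph{The Euler–Lagrange conditions are necessary.} Since $I(\mu_0)<\infty$ and $W(x)\ge c_0|x|^{-s}$, the measure $\mu_0$ has finite $(d-s)$\guio{}Riesz energy, so $\mu_0(N)=0$ whenever $\Capa_{d-s}(N)=0$. For $\nu\in\mathcal P(\R^d)$ with $I(\nu)<\infty$ and $\mu_\ep:=(1-\ep)\mu_0+\ep\nu$, $\ep\in[0,1]$, expanding the quadratic and linear parts of $I$ gives
\[
I(\mu_\ep)=I(\mu_0)+2\ep\Big(\int_{\R^d}U_{\mu_0}\,d\nu-C\Big)+\ep^2\,Q(\nu-\mu_0),\qquad C:=\int_{\R^d}U_{\mu_0}\,d\mu_0 ,
\]
all the integrals being finite by the positive semidefiniteness of $Q$ (e.g. $2B(\mu_0,\nu)\le Q(\mu_0)+Q(\nu)<\infty$). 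Minimality forces the right derivative at $\ep=0$ to be $\ge0$, i.e. $\int_{\R^d}U_{\mu_0}\,d\nu\ge C$ for every such $\nu$. If the $F_\sigma$ set $N:=\{U_{\mu_0}<C\}$ had positive $(d-s)$\guio{}capacity, then for some $n$ the closed set $\{U_{\mu_0}\le C-\tfrac1n\}$ would carry a probability measure of finite $(d-s)$\guio{}energy (hence finite $I$), contradicting the last inequality; thus $\Capa_{d-s}(N)=0$, which is \eqref{EL2}. Since moreover $\int_{\R^d}(U_{\mu_0}-C)\,d\mu_0=0$ and $U_{\mu_0}-C\ge0$ $\mu_0$\guio{}a.e. (because $\mu_0(N)=0$), we get $U_{\mu_0}=C$ $\mu_0$\guio{}a.e., which is \eqref{EL1}.

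\emph{Sufficiency and uniqueness.} The same expansion shows that $I$ is convex along segments in $\mathcal P(\R^d)$, because $Q(\nu-\mu)\ge0$ for $\mu,\nu\in\mathcal P(\R^d)$ of finite energy — here one uses the Fourier representation $Q(\sigma)=\int_{\R^d}\widehat W(\xi)\,|\widehat\sigma(\xi)|^2\,d\xi\ge0$, valid since $\widehat W(\xi)=|\xi|^{-(d-s)}\widehat W(\xi/|\xi|)\ge0$. Hence, if $\mu$ solves \eqref{EL1}--\eqref{EL2}, then for every $\nu$ with $I(\nu)<\infty$ convexity and the first\guio{}variation identity yield $I(\nu)\ge I(\mu)+2\big(\int_{\R^d}U_\mu\,d\nu-C\big)\ge I(\mu)$, using $U_\mu\ge C$ off a set of zero $(d-s)$\guio{}capacity and that $\nu$ ignores such sets; so the minimisers of $I$ are precisely the solutions of the Euler–Lagrange system, and in particular every minimiser solves it. Finally, let $\mu_0,\mu_1$ be minimisers. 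By \eqref{EL1} and $W\ast\mu_i\ge0$, $\supp\mu_i\subseteq\{|x|^2\le2C_i\}$ is compact; evaluating $I\big(\tfrac12(\mu_0+\mu_1)\big)=\tfrac12 I(\mu_0)+\tfrac12 I(\mu_1)-\tfrac14 Q(\mu_0-\mu_1)$ at this minimising midpoint forces $Q(\mu_0-\mu_1)=0$. Since $\widehat W$ is continuous, nonnegative and not identically zero on $\S^{d-1}$, it is strictly positive on an open cone, on which $\widehat{\mu_0-\mu_1}$ must vanish; but $\mu_0-\mu_1$ is compactly supported, so $\widehat{\mu_0-\mu_1}$ is real\guio{}analytic and therefore $\equiv0$, giving $\mu_0=\mu_1$. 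The delicate point in the whole argument is the rigorous Fourier identity for $Q$ (neither $W$ nor $\widehat W$ is square\guio{}integrable, and $\widehat W$ is controlled only through its homogeneity and its continuous profile on $\S^{d-1}$), which one obtains by mollification using the local integrability of both $W$ and $\widehat W$.
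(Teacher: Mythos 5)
Your proposal is correct and follows essentially the same route as the paper, which simply declares the argument ``by now standard'' and defers to \cite[Proposition~2.1]{CMMRSV2}: direct method with tightness from the confinement, Euler--Lagrange conditions from the first variation, and uniqueness from positive semidefiniteness of $Q$ via $\widehat W\ge 0$ together with real-analyticity of Fourier transforms of compactly supported measures. The only step you leave at the level of a remark --- the rigorous Parseval identity for $Q$ --- is exactly the step the paper flags as delicate, noting that it requires superharmonicity of the kernel for $s\in(0,d-2]$ but Landkof's argument (\cite[Theorem~1.11]{Land}) for $s\in(d-2,d)$, where plain mollification alone does not suffice because the energy need not decrease under smoothing.
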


\begin{proof} The proof is by now standard. Indeed, the positivity of $\Psi$ implies that the energy $I$ is bounded from below by the quadratic confinement, and this guarantees that minimising sequences are tight. Existence of the minimisers then follows by the lower-semicontinuity of the energy. By the lower bound of the energy in terms of the confinement it also follows that any minimiser has a compact support. 

Uniqueness of the minimiser is a consequence of the strict convexity of the energy on the class of measures with compact support and finite interaction energy. This, in its turn, follows by the assumption that the Fourier transform of the potential is non-negative on the sphere. 
For further details see, e.g., \cite[Proposition~2.1]{CMMRSV2}. Note that the first inequality in \cite[eq.~(2.11)]{CMMRSV2} follows by superharmonicity in the case $s\in(0,d-2]$, whereas for $s\in(d-2,d)$ it can be obtained as in the proof of \cite[Theorem~1.11]{Land}.

The characterisation of the minimiser in terms of the Euler-Lagrange conditions is due to the strict convexity of the energy. The derivation of the Euler-Lagrange conditions \eqref{EL1}--\eqref{EL2} follows by a simple adaptation of the proof of \cite[Theorem 3.1]{MRS}.
\end{proof}

%%%%%%
\subsection{Properties of the Gamma function}\label{sect:propG}
The Gamma function $\Gamma(z):= \int_0^\infty e^{-t} t^{z-1}\,dt$ is defined and analytic in the region $\Re(z)>0$, where $\Re(z)$ denotes the real part of $z\in \C$. We collect below a number of useful properties:
\begin{enumerate}
\item $\Gamma(n)=(n-1)!$ for $n\in \mathbb{N}$;
\smallskip
\item $\Gamma(1+z)=z\,\Gamma(z)$ for every $z\in \mathbb{C}$, $z\neq0, -1, -2, \ldots$;
\smallskip
\item $\Gamma(1-z)\Gamma(z)=\frac{\pi}{\sin(\pi z)}$ for every $z\in \mathbb{C}\setminus \mathbb{Z}$ (Euler reflection formula);
\smallskip
\item $\Gamma(z)\Gamma(z+\tfrac12)=2^{1-2z}\sqrt{\pi}\, \Gamma(2z)$ for every $z\in \mathbb{C}$, $z\neq0, -\frac12, -1, -\frac32, \ldots$ (Legendre duplication formula).
\end{enumerate}
For more details we refer to \cite[Sections~1.1 and~1.2]{Lebedev}.

%%%%%%%%%%%%%

\subsection{The Fourier transform.}\label{FTW:sect}
The definition of the Fourier transform we adopt in this paper is 
$$
\widehat{f}(\xi) =\frac{1}{(2\pi)^{d/2}}\int_{\R^d} f(x) e^{-i \xi \cdot x}\, dx, \quad \xi \in \R^d,
$$
for functions $f$ in the Schwartz space $\mathcal{S}$.
Correspondingly, the inverse Fourier transform is defined as
$$
f(x)=\frac{1}{(2\pi)^{d/2}} \int_{\R^d} \widehat{f}(\xi) e^{i \xi \cdot x}\, d\xi, \quad x \in \R^d.
$$

\subsection{The Bessel function of first kind}\label{Besselsec} The Bessel function of the first kind and arbitrary order $\nu\geq 0$ is defined, for $0\leq x<+\infty$, in terms of the following series
$$
J_\nu(x) = \sum_{n=0}^\infty \frac{(-1)^n (x/2)^{\nu+2n}}{\Gamma(n+1)\Gamma(n+\nu+1)},
$$
and is a real and bounded function. Since for fixed $x\geq 0$ the terms of the series are analytic functions of the variable $\nu$ (by the analyticity of the Gamma function), the uniform convergence of the series guarantees that $J_\nu$ is also an analytic function of $\nu$ (see, e.g. \cite[Section~5.3]{Lebedev}).

The behaviour of $J_\nu$ for small and large values of $x$ is described by the asymptotic formulas
\begin{align*}
J_\nu(x)&\sim \frac{x^\nu}{2^\nu \Gamma(1+\nu)}, \quad \text{as } x\to 0^+,\\
J_\nu(x)&\sim \sqrt{\frac{2}{\pi x}}\cos\left(x-\frac12 \nu\pi - \frac14\pi\right), \quad \text{as } x\to +\infty.
\end{align*}
For more details we refer to \cite[Section~5.16]{Lebedev}.

\subsection{The hypergeometric function} By the hypergeometric series is meant the power series
$$
\sum_{n=0}^\infty \frac{(a)_n(b)_n}{(c)_n}\frac{z^n}{n!}, 
$$
where $z\in \mathbb{C}$, $a,b,c \in \mathbb{C}$, with $c\notin \{0, -1, -2, \dots\}$, and the symbol $(\gamma)_n$ denotes the quantity
$$
(\gamma)_0=1, \quad (\gamma)_n= \gamma(\gamma+1)\cdot\dots\cdot(\gamma+n-1), \quad n=1,2,\dots.
$$
We note that for $\gamma\neq 0,-1,-2,\ldots,$ we have $(\gamma)_n= \Gamma(\gamma+n)/\Gamma(\gamma)$.
In the special case where either $a$ or $b$ is a non-positive integer, the series has a finite number of terms, and its sum reduces to a polynomial in $z$. In particular, if $b=-m$, with $m$ a non-negative integer, then 
\begin{equation}\label{poli}
\sum_{n=0}^\infty \frac{(a)_n(-m)_n}{(c)_n}\frac{z^n}{n!} = 
\sum_{n=0}^m (-1)^n \binom{m}{n} \frac{(a)_n}{(c)_n}z^n.
\end{equation}
In the general case the series converges for $|z|<1$, the sum of the series is denoted by 
\begin{equation}\label{2F1}
{}_2F_1(a,b;c;z) = \sum_{n=0}^\infty \frac{(a)_n(b)_n}{(c)_n}\frac{z^n}{n!}, \quad |z|<1,
\end{equation}
and is called the hypergeometric function. For fixed $z\in \mathbb{C}$ with $|z|<1$, ${}_2F_1$ is an entire function of $a$ and $b$ and a meromorphic function of $c$, with simple poles at the non-positive integers.

One can see easily that ${}_2F_1$ is invariant under the permutation of its first two arguments, and that 
\begin{equation}\label{der:hyp}
\frac{d}{dz}\,{}_2F_1(a,b;c;z)=\frac{ab}{c}\,{}_2F_1(a+1,b+1;c+1;z).
\end{equation}
It is easy to deduce from \eqref{der:hyp} that
\begin{equation}\label{der:hyp2}
	\frac{d}{dz}\left( z^{-a} {}_2F_1(a,b;c;z^{-1}) \right) = - a z^{-a-1} \,{}_2F_1(a+1,b;c;z^{-1}).
\end{equation}
If the parameters satisfy the condition $-1<\Re(c-a-b)\leq 0$, then the series converges for $|z|\leq1$, except at the point $z=1$.
If $\Re(c-a-b)>0$, the series extends continuously also at $z=1$. If, for simplicity, we assume $\Re(a)>0$, $\Re(b)>0$, $\Re(c-a)>0$ and $\Re(c-b)>0$, then 
we have the following three regimes for the behaviour of the series at $z=1$.
When $\Re(c-a-b)>0$,
\begin{equation}\label{limz1}
\lim_{z\to 1^-}{}_2F_1(a,b;c;z)={}_2F_1(a,b;c;1)=\frac{\Gamma(c)\Gamma(c-a-b)}{\Gamma(c-a)\Gamma(c-b)}.
\end{equation}
If $c=a+b$ and $c\not\in\Z$,
\begin{equation}\label{limz2} 
\lim_{z\to 1^-}\frac{{}_2F_1(a,b;a+b;z)}{-\log(1-z)}=\frac{\Gamma(a+b)}{\Gamma(a)\Gamma(b)}.
\end{equation}
Finally, if $\Re(c-a-b)<0$,
\begin{equation}\label{limz3}
\lim_{z\to 1^-}\frac{{}_2F_1(a,b;c;z)}{(1-z)^{c-a-b}}=\frac{\Gamma(c)\Gamma(a+b-c)}{\Gamma(a)\Gamma(b)}.
\end{equation}
For more details we refer to  
\cite[Chapter 9]{Lebedev} and \cite[Chapter 2]{ErdHTFI}.

\smallskip

We recall the following result from \cite{FrankMatzke2023}.

\begin{lemma}\label{hyper}
	Let $a\geq 0$, $b\in\R$, and $c>0$. If $c\geq\max\{a,b\}$, then the function
	$$
	[1,\infty) \ni z \mapsto z^{-a} {}_2 F_1(a,b;c;z^{-1}) 
	$$
	is non-negative. If $c\geq\max\{a+1,b\}$, it is non-increasing and, if $c\geq\max\{a+2,b\}$, it is convex.
\end{lemma}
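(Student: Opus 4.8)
The plan is to deduce all three assertions from the first one alone, namely non-negativity, by repeatedly invoking the differentiation identity \eqref{der:hyp2}. For $\alpha\ge 0$ I would write $g_\alpha(z):=z^{-\alpha}\,{}_2F_1(\alpha,b;c;z^{-1})$ for $z\in[1,\infty)$, keeping $b$ and $c$ fixed as in the statement, so that the function under study is $g_a$. Identity \eqref{der:hyp2} reads exactly $g_a'=-a\,g_{a+1}$ on $(1,\infty)$, and differentiating once more and applying \eqref{der:hyp2} again gives $g_a''=a(a+1)\,g_{a+2}$. Since $a\ge 0$, it follows that $g_{a+1}\ge 0$ forces $g_a$ to be non-increasing and that $g_{a+2}\ge 0$ then forces $g_a$ to be convex; the endpoint $z=1$, where $g_\alpha$ may equal $+\infty$, is harmless because $g_\alpha$ is continuous on $(1,\infty)$ with a limit in $[0,+\infty]$ as $z\to1^+$. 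Hence the whole lemma reduces to the single statement
$$
g_\alpha\ge 0 \ \text{ on } [1,\infty)\quad\text{whenever}\quad \alpha\ge 0,\ c>0,\ c\ge\max\{\alpha,b\},
$$
applied with $\alpha=a$, $\alpha=a+1$ and $\alpha=a+2$: these three choices require precisely $c\ge\max\{a,b\}$, $c\ge\max\{a+1,b\}$ and $c\ge\max\{a+2,b\}$.

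To prove this non-negativity I would distinguish three cases, using that $c\ge\alpha$. If $\alpha=0$, then ${}_2F_1(0,b;c;\cdot)\equiv1$, so $g_0\equiv1$. If $\alpha>0$ and $c=\alpha$, the binomial theorem gives ${}_2F_1(\alpha,b;\alpha;w)=(1-w)^{-b}$, whence $g_\alpha(z)=z^{-\alpha}(1-z^{-1})^{-b}=z^{\,b-\alpha}(z-1)^{-b}\ge0$ for $z\ge1$. If $c>\alpha>0$, I would use the Euler integral representation of ${}_2F_1$, legitimate here since $c>\alpha>0$: for $z>1$,
$$
g_\alpha(z)=\frac{z^{-\alpha}\,\Gamma(c)}{\Gamma(\alpha)\,\Gamma(c-\alpha)}\int_0^1 t^{\alpha-1}(1-t)^{c-\alpha-1}\Bigl(1-\tfrac tz\Bigr)^{-b}\,dt.
$$
The prefactor is positive, and for $z>1$ and $t\in(0,1)$ one has $1-t/z\in(0,1)$, so $(1-t/z)^{-b}>0$ for \emph{every} real $b$; the integrand is therefore positive and $g_\alpha(z)>0$ for all $z>1$. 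Letting $z\to1^+$ and using \eqref{limz1}--\eqref{limz3} to identify the limit then gives $g_\alpha(1)\ge0$ as well.

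Because the computation is this elementary, I do not anticipate a genuine obstacle; the two places that need a little care are the endpoint $z=1$, where the function may be infinite and the Euler representation is not directly available (handled by the limiting argument just described), and the fact that $b$ is allowed to be negative, which is immediate once one observes that the base $1-t/z$ of the integrand stays in $(0,1)$, so that raising it to the arbitrary real power $-b$ always produces a positive number. Conceptually, the only substantive ingredient is the identity \eqref{der:hyp2}, which turns the monotonicity and convexity claims into mere repetitions of the non-negativity argument with the first parameter shifted up by one or two.
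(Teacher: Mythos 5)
Your proof is correct, but it is genuinely more self-contained than the paper's. The paper disposes of the lemma by citation: non-negativity is \cite[Lemma 5]{FrankMatzke2023}, monotonicity is that lemma combined with \eqref{der:hyp2} (exactly your reduction $g_a'=-a\,g_{a+1}$), and convexity is quoted from \cite[Corollary~6]{FrankMatzke2023} rather than re-derived. You instead (i) obtain convexity by iterating \eqref{der:hyp2} once more, $g_a''=a(a+1)\,g_{a+2}$, so that all three assertions collapse to the single non-negativity statement with the first parameter shifted, and (ii) prove that non-negativity from scratch via the Euler integral representation in the first parameter (with the degenerate cases $\alpha=0$ and $c=\alpha$ handled separately by the trivial and binomial series). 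Both steps are sound: the case split $\alpha=0$, $c=\alpha>0$, $c>\alpha>0$ is exhaustive under $c\ge\alpha\ge0$, $c>0$; the integrand $t^{\alpha-1}(1-t)^{c-\alpha-1}(1-t/z)^{-b}$ is indeed positive for every real $b$ since $1-t/z\in(0,1)$; and the endpoint $z=1$ is correctly absorbed by continuity of the limit in $[0,+\infty]$ (via \eqref{limz1}--\eqref{limz3}). What your route buys is independence from the external reference and, incidentally, a slightly stronger statement: your argument never uses $c\ge b$ for the non-negativity step, only $c\ge a\ge 0$ and $c>0$. What it costs is re-proving material the paper deliberately outsources. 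No gaps.
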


\begin{proof}
	The first assertion follows from \cite[Lemma 5]{FrankMatzke2023}. The second assertion is a consequence of the same lemma together with \eqref{der:hyp2}, and the third one follows from \cite[Corollary~6]{FrankMatzke2023}.
\end{proof}

\subsection{A formula relating Bessel and hypergeometric functions} In this section we prove the following identity. 
For $\alpha\in \R, \alpha\neq\pm1$ and for $0<s<5$ we have 
\begin{align}\label{cosFIs}
&\int_0^{\infty} \frac{J_{\frac s2+1}(t)}{t^{2-\frac s2}}\,\cos(t\alpha)\, dt= 2^{\frac s2-2}\,\Gamma(\tfrac s2) \bigg((1-s\alpha^2)\chi_{(-1,1)}(\alpha) \nonumber\\
&
\qquad+ 2^{-s+1} \frac{\Gamma(s)}{\Gamma(\tfrac s2+2)\Gamma(\tfrac s2)} \cos\left(\frac{\pi s}2\right) |\alpha|^{-s}{}_2F_1
\left(\frac s2,\frac{s+1}2;\frac s2+2;\alpha^{-2}
\right)\chi_{[-1,1]^c}(\alpha)\bigg).
\end{align}
Identity \eqref{cosFIs} means that, if the function
\begin{equation}\label{Is}
I_s(t):=\frac{J_{\frac s2+1}(t)}{t^{2-\frac s2}}, \quad t \in (0,\infty),
\end{equation}
is extended evenly on $\R$, then its distributional Fourier transform, up to a multiplicative constant, is
the right-hand side of \eqref{cosFIs} for $0<s<5$.
 
Before proving \eqref{cosFIs} we introduce the following shorthand notation that will be used 
throughout the paper:
\begin{align}\label{kappa-s-0}
\tilde{f}_s(\alpha)&:=1-s\alpha^2,
\\\label{kappa-s}
\kappa_s&:= 2^{-s+1} \frac{\Gamma(s)}{\Gamma\big(\tfrac s2+2\big)\Gamma\big(\tfrac s2\big)}\cos\left(\frac{\pi s}2\right),\\\label{a-sF}
f_s(\alpha)&:= |\alpha|^{-s}{}_2F_1
\left(\frac s2,\frac{s+1}2;\frac s2+2;\alpha^{-2}\right),\quad |\alpha|>1,
\\\label{acca-s}
h(\alpha,s)&:=2^{\frac s2-2}\,\Gamma(\tfrac s2) \bigg( \tilde{f}_s(\alpha)\chi_{(-1,1)}(\alpha)+\kappa_sf_s(\alpha)\chi_{[-1,1]^c}(\alpha)\bigg).
\end{align}

In terms of the notation above, our claim can be rephrased as 
\begin{equation}\label{cosFIsbis}
\widehat{I_s}(\alpha)=\sqrt{\frac2\pi}h(\alpha,s).
\end{equation}
Note that $h(\cdot,s)\in L^1_{\text{loc}}(\R)$ for $0<s<1$, whereas $h(\cdot,s)\in L^1(\R)$ for  $1\leq s<5$.
Indeed, integrability close to $\alpha=\pm1$ follows by  \eqref{limz1} for $0<s<3$, by \eqref{limz2} for $s=3$, and by \eqref{limz3} for $3<s<5$. Integrability at infinity holds for any $s\geq 1$ (note that $h(\alpha,1)=0$ for $|\alpha|>1$, since $\kappa_1=0$). 

For the derivation of \eqref{cosFIsbis}, we first recall that by \cite[formula (13) on page 45]{Bateman} 
\begin{align}\label{cosFIs-book}
&\int_0^{\infty} t^{2\mu-1} J_{2\nu}(t)\,\cos(t\alpha)\, dt= \frac{2^{2\mu-1}\,\Gamma(\mu+\nu)}{\Gamma(1+\nu-\mu)}  \, {}_2F_1
\left(\nu+\mu,\mu-\nu;\tfrac12;\alpha^2 
\right)\,
\chi_{(-1,1)}(\alpha) \nonumber\\
&
\quad+ \frac{\Gamma(2\nu+2\mu)}{2^{2\nu} \Gamma(2\nu+1)} \cos\left((\nu+\mu)\pi\right) |\alpha|^{-2\nu-2\mu}{}_2F_1
\left(\nu+\mu,\nu+\mu+\tfrac12;2\nu+1;\alpha^{-2}
\right)\chi_{[-1,1]^c}(\alpha),
\end{align}
with parameters $\mu, \nu\in \R$ satisfying the requirement $-\nu<\mu<\frac34$.  Applying \eqref{cosFIs-book} with 
$\mu=\frac{s}4-\frac12$ and $\nu=\frac{s}4+\frac12$, and by \eqref{Is}, we obtain the identity
\begin{align}
&\int_0^{\infty} I_s(t) \cos(t\alpha)\, dt=
2^{\frac s2-2}\,\Gamma(\tfrac s2)\, {}_2F_1
\left(\frac s2,-1;\frac12;\alpha^2 
\right)\chi_{(-1,1)}(\alpha)  \nonumber\\\label{due}
&\qquad + 2^{-\frac s2-1}\frac{\Gamma(s)}{\Gamma(\tfrac s2+2)} \,\cos\left(\frac{\pi s}2\right) |\alpha|^{-s}
{}_2F_1
\left(\frac s2,\frac{s+1}2;\frac s2+2;\alpha^{-2}
\right)\chi_{[-1,1]^c}(\alpha),
\end{align}
where the condition $-\nu<\mu<\frac34$ results into $0<s<5$. 

Note that the first term in the right-hand side of \eqref{due} can be written more explicitly, since by applying \eqref{poli}--\eqref{2F1} with $b=-1$ we have that 
$$
{}_2F_1
\left(\frac s2,-1;\frac12;\alpha^2
\right) = 1- \frac{(s/2)_1}{(1/2)_1}\alpha^2=1-s\alpha^2.
$$
This proves \eqref{cosFIsbis} for $0<s<3$. Indeed, in this range $I_s\in L^1(0,\infty)$ and hence the right-hand side of 
 \eqref{due} is the Fourier transform of $I_s$, up to a multiplicative constant.  
 
 This is all we need for the proof of Theorem \ref{chara}. For the sake of completeness we proceed with the proof of  \eqref{cosFIsbis} in the range $3\le s\le5$ .

For $3\le s<5$ the function $I_s$ is not in $L^1(0,\infty)$ and  \eqref{due}  has to be (in principle) interpreted as a pointwise identity, where the integral in the left-hand side is an improper integral.

To prove \eqref{cosFIsbis} for $3\le s<5$, we need to show that
for any even function $\varphi$ in the Schwarz space $\mathcal{S}$ we have
\begin{equation*}
 \int_{\R} I_s(t)\, \widehat{\varphi} (t)\,dt= \sqrt{\frac{2}{\pi}}\int_{\R} h(\alpha,s)\,\varphi(\alpha)\,d\alpha,
\end{equation*}
which, more explicitly, is
\begin{equation*} 
 \int_{\R} I_s(t)\, \int_{\R} \varphi(\alpha)\,e^{-it\alpha}\,d\alpha\,dt= 2\int_{\R} h(\alpha,s)\,\varphi(\alpha)\,d\alpha.
\end{equation*}
By the evenness of $h$ in $\alpha$, and of $I_s$ and $\varphi$, this is equivalent to
\begin{equation}\label{cft4}
 \int_{0}^\infty \frac{J_{\frac s2+1}(t)}{t^{2-\frac s2}}\, \int_{0}^\infty \varphi(\alpha) \cos(t\alpha)\,d\alpha\,dt= \int_{0}^\infty h(\alpha,s)\,\varphi(\alpha)\,d\alpha.
\end{equation}
Now the above identity holds for $0<s<3$ by Fubini's Theorem. Moreover, both sides of the identity are analytic functions of $s$, due to the analyticity of the Gamma function, the Bessel function, and the hypergeometric function. Hence the two sides of the identity are equal for the whole interval $0<s<5$, which gives \eqref{cft4}. For $s\geq 5$ analyticity breaks down because, as already pointed out, $h(\alpha,s)$
is not integrable in $\alpha$ close to $\alpha=\pm 1$.

For $s=5$, we can still compute the Fourier transform of $I_5$, evenly extended on $\R$. We have that
\begin{equation}\label{cosFI5bis}
\widehat{I_5}(\alpha)=
\frac32 \, (1-5\alpha^2)\chi_{(-1,1)}(\alpha)  + \delta_{-1}(\alpha)+\delta_1(\alpha),
\end{equation}
where, for $z\in\R$, $\delta_z$ denotes the Dirac measure supported at $z$.

To prove \eqref{cosFI5bis}, we first use \cite[formula (13) on page 45]{Bateman} with $\mu=-1/4$ and $\nu=7/4$ and obtain
\begin{eqnarray*}
\int_0^{\infty} J_{\frac 72}(t) \, t^{-\frac32}\,\cos(t\alpha)\, dt & = & \frac{2^{-3/2}\,\Gamma(3/2)}{\Gamma(3)}  \, {}_2F_1
\left(\tfrac32,-2;\tfrac12;\alpha^2 
\right)\,
\chi_{(-1,1)}(\alpha)\\
& = & \sqrt{\frac\pi2}\frac1{2^3}(1-6\alpha^2+5\alpha^4)\,
\chi_{(-1,1)}(\alpha).
\end{eqnarray*}
Set $\tilde{I}_5(t)=J_{\frac 72}(t) \, t^{-\frac32}$. Since $\tilde{I}_5$, evenly extended on $\R$, is integrable on $\R$, the above formula yields
$$\widehat{\tilde{I}_5}(\alpha)=\frac1{2^3}(1- 6\alpha^2+5\alpha^4)\,
\chi_{(-1,1)}(\alpha).$$
The equality $I_5(t)=t^2\tilde{I}_5(t)$ implies that, in the distributional sense,
$$\widehat{I_5}(\alpha)=-\frac{d^2}{d\alpha^2}\widehat{\tilde{I}_5}(\alpha),$$
from which \eqref{cosFI5bis} follows.

\subsection{Ellipses and ellipsoids} For any $a\in \R^d$ we write $a_i=a\cdot e_i$, and $D(a)$ stands for the $d\times d$ diagonal matrix such that $(D(a))_{ii}=a_i$. Given $a\in \R^d$ with $a_i>0$, we let 
\begin{equation}\label{el}
E_0(a):= \left\{x\in \R^d:\ \sum_{i=1}^d\frac{x_i^2}{a_i^2}\le 1\right\}
\end{equation}
denote the compact set enclosed by the ellipsoid with semi-axes of length $a_i$ on the coordinate axes. Note that 
\begin{equation*}
E_0(a) = D(a)\overline{B},
\end{equation*}
where $B$ denotes the open unit ball $B_1(0)\subset\R^d$.
A general ellipsoid $E\subset \R^d$ centred at the origin can be obtained by rotating $E_0(a)$ in \eqref{el} with respect to the coordinate axes, namely as
\begin{equation}\label{ellrot}
E= R E_0(a)=R D(a) \overline{B},
\end{equation}
for some rotation $R\in SO(d)$.

\subsection{The Fourier transform of the candidate minimiser}\label{sect:Fmin}
In the isotropic case where $\Psi=1$ in \eqref{potdef}, for $d\geq 2$ and $s\in (\max\{d-4,0\},d)$, the minimiser of the energy $I$ over $\mathcal{P}(\R^d)$ is the probability measure $\mu_{\text{iso},d}$ defined as
\begin{equation}\label{Blatta}
\mu_{\text{iso},d}(x):= c_d(1-|x/r_d|^2)^{\frac{s+2-d}2}\chi_{\Rd\overline{B}}(x),
\end{equation}
where $c_d>0$ and $\Rd>0$ are explicit constants depending on $d$ and $s$, and where we identified the measure with its density $\mu_{\text{iso},d}\in L^1(\R^d)$; see \cite{FrankMatzke2023}. Note that the super-Coulombic range $s\geq d-2$ leads to a non-negative power $\frac{s+2-d}2\geq 0$ in \eqref{Blatta}, and hence to a bounded density. The density of the measure is instead unbounded in the sub-Coulombic range $s<d-2$. In particular, for $d=2$ the power of the density in \eqref{Blatta} is always positive (and hence the density is bounded).

The Fourier transform of $\mu_{\text{iso},d}$ is well known (see, for example, \cite[Appendix B.5]{Grafakos}). The restriction on the power of the quadratic function imposed in \cite{Grafakos} translates into the condition $s>d-4$ for \eqref{Blatta}. Adjusting the constants, due to the slightly different definition of the Fourier transform adopted in \cite{Grafakos}, we obtain for $s>d-4$
\begin{align*}
\widehat{\mu_{\text{iso},d}}(\xi) &=c_{s,d}\frac{J_{\frac s2+1}(\Rd|\xi|)}{\Rd^{\frac s2+1}|\xi|^{\frac s2+1}}
,
\end{align*}
where we set
\begin{equation*} 
c_{s,d}:= c_d\Rd^d 2^{\frac{s+2-d}2}\Gamma(\tfrac{s-d}2+2).
\end{equation*}
 
Let $E\subset \R^d$ be an ellipsoid of the form \eqref{ellrot}. We now define the (absolutely continuous) probability measure $\mu_E\in \mathcal{P}(\R^d)$ as 
\begin{equation}\label{Blatta-E}
\mu_{E}(x)=  \frac{c_d}{\prod_{j=1}^d (a_j/r_d)}\left(1-\left|D\big(\tfrac1a\big)R^Tx\right|^2\right)^{\frac{s+2-d}2}\chi_{ E}(x),
\end{equation}
which is the push-forward of the measure \eqref{Blatta} through the function $x\mapsto RD(a/\Rd)x$.
Here $D\big(\tfrac1a\big)$ is the diagonal matrix such that $(D(\tfrac 1a))_{ii}=1/a_i$.

Then it is easy to see that for $s>d-4$
\begin{align}\label{hatBlatta-E}
\widehat{\mu_E}(\xi) = \widehat{\mu_{\text{iso},d}}(D(a/r_d)R^T\xi)
= c_{s,d}\frac{J_{\frac s2+1}(|D(a)R^T\xi|)}{|D(a)R^T\xi|^{\frac s2+1}}.
\end{align}

\subsection{The Fourier representation of $W\ast \mu_E$ for $s\in (d-4,d)\cap(0,5]$.} 
Let $E$ be an ellipsoid of the form \eqref{ellrot}, and let $W$ be as in \eqref{potdef}. In this section we do not require any sign condition on $\Psi$.  

First of all, since the kernel $W$ is homogeneous of degree $-s$, its Fourier transform $\widehat{W}$ is homogeneous of degree
$-(d-s)$. Moreover,
the assumption $0<s<d$ provides local integrability of the kernel $W$. To compute the Fourier transform of $W$, it is convenient to write the profile $\Psi\in L^2(\S^{d-1})$ in terms of spherical harmonics, namely
$$\Psi=\sum_{n=0}^{\infty}\psi_n,$$
where each $\psi_n$ is a spherical harmonic of order $n$ on $\S^{d-1}$ (in particular, $\psi_0$ is just a constant).
Then
$$W= \sum_{n=0}^{\infty}W_n,\qquad\text{where }W_n(x)=\frac{1}{|x|^s}\,\psi_n\bigg(\frac{x}{|x|}\bigg).$$ 
By using \cite[Chapter V, Lemma~2]{S} for $n=0$ and \cite[Chapter III, Theorem~5]{S} for $n\geq 1$, we infer that, for suitable constants $b_{n,s,d}$,
\begin{equation}\label{hatW}
\widehat{W}(\xi)=\frac{1}{|\xi|^{d-s}}\sum_{n=0}^{\infty}b_{n,s,d} \,\psi_n \bigg(\frac{\xi}{|\xi|}\bigg)=\frac1{|\xi|^{d-s}}{\widehat{W}\bigg(\frac{\xi}{|\xi|}\bigg)},\quad s\in (0,d),
\end{equation}
provided the series at the right-hand side converges, for instance in $L^2(\S^{d-1})$, to a function which, with a little abuse of notation, we denote $\widehat{\Psi}(\xi/|\xi|):=\widehat{W}(\xi/|\xi|)$.
We recall that in our main theorem, Theorem~\ref{chara}, such a function $\widehat{\Psi}$ is assumed to be continuous on $\S^{d-1}$.
Since $\Psi$ is even, we infer that also $\widehat{\Psi}$ is even.
Finally, we set
$b_{s,d}:=b_{0,s,d}$ and we note that it is a positive constant.

Our goal is to derive a Fourier representation formula for $W\ast \mu_E$. To do so we first deal with the case $s\in (d-4,d)\cap(0,3)$, in which the integrability condition for Fourier inversion formula holds. We then extend the representation formula to the remaining  range $s\in(d-4,d)\cap[3,5]$ by analyticity.  

\subsubsection{The case of $s\in (d-4,d)\cap(0,3)$} 
First of all, by \eqref{hatBlatta-E} and \eqref{hatW} we have that for $s\in (d-4,d)\cap(0,3)$
\begin{eqnarray*}
\widehat{W\ast\mu_E}(\xi) & = & (2\pi)^{d/2}\widehat{W}(\xi) \widehat{\mu_E}(\xi)
\\
& = &  (2\pi)^{d/2} c_{s,d}
\frac{J_{\frac s2+1}(|D(a)R^T\xi|)}{|D(a)R^T\xi|^{\frac s2+1}}
\frac1{|\xi|^{d-s}}\widehat{\Psi}(\xi/|\xi|)\in L^1(\mathbb{R}^d).
\end{eqnarray*}
Integrability in $\mathbb{R}^d$ of the function at the right-hand side, for $0<s<3$, follows immediately from the asymptotic formulas for Bessel functions in Section~\ref{Besselsec}.

Hence, for $s\in (d-4,d)\cap(0,3)$ the inversion formula holds, that is,
\begin{equation}\label{inv-WmE}
(W\ast\mu_E)(x)=\int_{\R^d}\widehat W(\xi)\widehat{\mu_E}(\xi)e^{ix\cdot\xi}\, d\xi=\int_{\R^d}\widehat W(\xi)\widehat{\mu_E}(\xi)\cos(x\cdot\xi)\, d\xi
\end{equation}
for every $x\in\R^d$. Writing this integral in spherical coordinates, we obtain
\begin{align*}
&(W\ast\mu_E)(x) \\
&=c_{s,d}  \int_{\mathbb{S}^{d-1}}
\int_0^{\infty} \frac{J_{\frac s2+1}( \rho |D(a)R^T\omega|)}{\rho^{2-\frac s2}}\,\frac{\widehat{\Psi}(\omega)}{{|D(a)R^T\omega|^{\frac s2+1}}}\,\cos(x\cdot \rho\omega)\, d\rho
\,d\mathcal{H}^{d-1}(\omega).
\end{align*}
Setting
\begin{equation}\label{alpha-rho}
t:= \rho |D(a)R^T\omega|\quad\text{and}\quad \alpha(x,\omega):=\frac{x\cdot \omega}{|D(a)R^T\omega|},
\end{equation} 
we have for every $x\in\R^d$
$$
(W\ast\mu_E)(x)=c_{s,d}  \int_{\mathbb{S}^{d-1}}\left(
\int_0^{\infty} \frac{J_{\frac s2+1}(t)}{t^{2-\frac s2}}\,\cos(t\alpha(x,\omega))\, dt\right)\frac{\widehat{\Psi}(\omega)}{{|D(a)R^T\omega|^s}}
\,d\mathcal{H}^{d-1}(\omega).
$$
By using formula \eqref{cosFIs} for the radial integral, and by 
setting $\tilde c_{s,d}:= c_{s,d} \,2^{\frac s2-2}\,\Gamma(\tfrac s2)>0$ and 
\begin{equation}\label{def-g}
g_s(\omega):= \tilde c_{s,d}\frac{\widehat{\Psi}(\omega)}{{|D(a)R^T\omega|^s}},
\end{equation}
we have that for every $x\in \R^d$, using \eqref{kappa-s-0}--\eqref{a-sF},
\begin{align}\label{potential}\nonumber
(W\ast \mu_E)(x) &=\int_{\mathbb{S}^{d-1}}g_s(\omega)\bigg(\tilde{f}_s\big(\alpha(x,\omega)\big)\chi_{(-1,1)}\big(\alpha(x,\omega)\big)\\&\qquad\qquad +\kappa_s f_s\big(\alpha(x,\omega)\big)
\chi_{[-1,1]^c}\big(\alpha(x,\omega)\big)\bigg) 
\,d\mathcal{H}^{d-1}(\omega).
\end{align}
This is the representation we were looking for.

Thanks to \eqref{def-g}--\eqref{potential} we can show that $W\ast\mu_E$ is a quadratic polynomial in $ E$. Indeed,  for $x$ in the interior of $ E$ one has that
$|\alpha(x,\omega)|<1$ for any $\omega\in\mathbb{S}^{d-1}$.  
Hence, if $x$ is in the interior of $ E$, we have that 
\begin{equation}\label{pot-inside}
(W\ast\mu_E)(x)=\tilde c_{s,d}\int_{\mathbb{S}^{d-1}}\frac{\widehat{\Psi}(\omega)}{{|D(a)R^T\omega|^s}}(1-s\alpha(x,\omega)^2)
\,d\mathcal{H}^{d-1}(\omega),
\end{equation}
which is quadratic in $x$, up to an additive constant.

\subsubsection{The case of $s\in (d-4,d)\cap[3,5]$}\label{sects35} 
Here we cannot apply the inversion formula \eqref{inv-WmE} directly, since $\widehat{W\ast\mu_E}\notin L^1(\R^d).$ To deal with the non-integrable blow-up of the Fourier transform of the potential at infinity one could resort to a regularisation process, but it is  more direct to use analyticity. 

We claim that \eqref{potential} also holds for $3\le s< 5.$ Note that both sides of the equation are well defined in this range. By the special form of he Barenblatt density the convolution in the left hand side is a continuous function of $x$ on $\mathbb{R}^d.$ 
 The first term in the right hand side has a bounded integrand and the second term is an absolutely convergent integral since
$$
|f_s(\alpha(x,\omega))| \le C \, 1/(|\alpha(x,\omega)|-1)^p,\quad \text{with}\quad  p=(s-3)/2 < 1, \quad\text{provided}\quad s<5.
$$
This follows from \eqref{limz3}. Then both sides of \eqref{potential} are analytic functions of $s$ on the strip $\{s\in \mathbb{C}: 0<\Re(s)<5 \}$ by Morera's Theorem. These functions are equal on $(0,3)$  and thus they are equal on $(0,5).$

Setting
\begin{equation*}\label{}
G_s(x)= \int_{\mathbb{S}^{d-1}} g_s(\omega)\, \kappa_s f_s\big(\alpha(x,\omega)\big)
\chi_{[-1,1]^c}\big(\alpha(x,\omega)\big) 
\,d\mathcal{H}^{d-1}(\omega), \quad x\in \mathbb{R}^d,
\end{equation*}
\eqref{potential} 
becomes, for $3\le s <5,$
\begin{align}\label{potentialG}
(W\ast \mu_E)(x) &=\int_{\mathbb{S}^{d-1}}g_s(\omega)\big(1-s \alpha(x,\omega)^2\big) \chi_{(-1,1)}\big(\alpha(x,\omega)\big)+ G_s(x), \quad x\in \mathbb{R}^d,
\end{align}
with $G_s$ a non-negative function which vanishes on $E$  ($G_s$ is non-negative because $\kappa_s$ and $f_s$ are). As we will show in the next section Theorem \ref{chara} follows readily from the representation above for $3\leq s <5.$
%Indeed \eqref{potentialG} yields
%\begin{equation*}\label{}
%(W\ast \mu_E)(x) =\int_{\mathbb{S}^{d-1}}g_s(\omega)\big(1-s \alpha(x,\omega)^2\big)\,d\mathcal{H}^{d-1}(\omega), \quad x \in E,
%\end{equation*}
%which is a quadratic function of $x.$ Then the continuity argument applies and we find an ellipsoid such that EL1 is satisfied. That EL2 follows from EL1 is immediate for $3\le s < 5$ by the usual argument because $G_s$ is non-negative and can be ignored.

For $s=5$ one would like to take limit in \eqref{potentialG} as $s \rightarrow 5^-.$  There is no trouble in taking limit for the left hand side, where the dependence in $s$ is in the kernel \eqref{potdef} and in the density \eqref{thm:min}. The limit of the first term in the right hand side also exists by dominated convergence. Thus one can take the limit

\begin{equation*}\label{}
\lim_{s \to 5^-} G_s(x):=G_5(x), \quad x\in \mathbb{R}^d.
\end{equation*}
Therefore for the kernel $W$ and the measure $\mu_E$ corresponding to $s=5$ we get
\begin{align}\label{pot5}
(W\ast \mu_E)(x) =\int_{\mathbb{S}^{d-1}} g_5(\omega) \big(1-5\alpha(x,\omega)^2\big)\chi_{(-1,1)}\big(\alpha(x,\omega)\big)\,d\mathcal{H}^{d-1}(\omega)+G_5(x), \quad x\in \mathbb{R}^{d},
\end{align}
with $G_5$ a non-negative function vanishing on $E.$ As in the case $3\le s <5$, Theorem \ref{chara} follows readily from this representation for $s=5.$

Note that \eqref{potentialG} for  $3\le s \le 5$ entails that the interaction potential $W\ast \mu_E$ is a quadratic polynomial on the interior of $E.$ This follows from the fact that if $x$ is in the interior of $E$ then $|\alpha(x,\omega)|< 1, \; \omega \in \mathbb{S}^{d-1},$ which gives \eqref{pot-inside} in view of \eqref{potentialG} and \eqref{pot5}.

\section{Proof of the main result}\label{sec:main-res}
In this section we prove the core of Theorem~\ref{chara}, namely the characterisation of the minimiser of the energy $I$, whose existence and uniqueness has been established in Proposition~\ref{ex+uniq}, in terms of the Barenblatt profile on an ellipsoid.

\smallskip

Let $E$ be an ellipsoid of the form \eqref{ellrot} and let $\mu_E$ be as in \eqref{Blatta-E}. For any $x\in \mathbb{R}^d$ we define the \emph{potential} \begin{equation}\label{potential_EL}
P_E(x):=(W\ast\mu_E)(x)+\frac{|x|^2}2. 
\end{equation}
We need to show that there exists an ellipsoid $E$
such that the corresponding function $P_E$ defined as in \eqref{potential_EL} satisfies \eqref{EL1} and \eqref{EL2}. We present the proof in subsections \ref{s:EL1} and  \ref{s:EL2}, devoted to the first and the second Euler-Lagrange condition, respectively.

\subsection{The first Euler-Lagrange condition}\label{s:EL1}

We emphasise that in this subsection we will make use of the strict positivity condition $\widehat{W}> 0$ on $\mathbb{S}^{d-1}$.  

%We proceed differently for $s\in [d-3,d)\cap(0,3)$ and $s\in [d-3,d)\cap[3,5]$, since in the latter case we only have a representation of the regularised potential, and hence more care will be needed.

By the regularity and evenness of the potential $P_E$ in \eqref{potential_EL}, proving condition \eqref{EL1} is equivalent to showing that the Hessian of $P_E$ vanishes on $ E$. 
By \eqref{alpha-rho} and \eqref{pot-inside}, which holds for $0<s \le5,$ this is equivalent to showing that for $i,j=1,\dots,d$
\begin{equation}\label{EL1-Fourier}
\gamma_{s,d} \int_{\mathbb{S}^{d-1}}\frac{\omega_i\omega_j \widehat{\Psi}(\omega)}{|D(a)R^T\omega|^{s+2}}\,d\mathcal{H}^{d-1}(\omega)=\delta_{ij},
\end{equation}
$\delta_{ij}$ being the Kronecker delta and $\gamma_{s,d}:=2s \,\tilde{c}_{s,d}$.

We need to show that
 there exist $a=(a_1,\dots,a_d)\in \R^d$, with $a_i>0$, and $R\in SO(d)$ such that \eqref{EL1-Fourier} is satisfied.

We note that 
$$
|D(a)R^T\omega|=\left(RD(a^2)R^T\omega\cdot \omega\right)^{1/2}=: (M\omega\cdot \omega)^{1/2},
$$
where $M\in \mathbb{M}_+$, and $\mathbb{M}_+$ denotes the space of symmetric and positive definite matrices in $\R^{d\times d}$. By symmetry, we can consider $\mathbb{M}_+$ as an open subset of $\R^{\frac{d(d+1)}{2}}$. Then solving \eqref{EL1-Fourier} is equivalent to finding $M\in \mathbb{M}_+$ such that
\begin{equation}\label{EL1-M}
\gamma_{s,d}
\int_{\mathbb{S}^{d-1}}\frac{\omega_i\omega_j \widehat{\Psi}(\omega)}{(M\omega\cdot\omega)^{\frac s2+1}}\,d\mathcal{H}^{d-1}(\omega)=\delta_{ij}.
\end{equation}

\noindent  We prove \eqref{EL1-M} via a continuity argument. Let $t\in [0,1]$; we define the potential 
$$
W_t(x):= \frac1{|x|^s}\left(t\Psi\left(\frac{x}{|x|}\right)+\frac{1-t}{b_{s,d}}\right),\quad x\in \R^d,\ x\neq 0,
$$
with $b_{s,d}:= b_{0,s,d}>0$ defined in \eqref{hatW}, namely the Fourier transform of $x\mapsto \frac1{|x|^s}$ is $\xi\mapsto b_{s,d}\, \frac1{|\xi|^{d-s}}$.
By  \eqref{hatW} we have
\begin{equation}\label{hatWt}
\widehat{W_t}(\xi)=\frac1{|\xi|^{d-s}}\widehat{\Psi_t}(\xi/|\xi|):=\frac1{|\xi|^{d-s}} \big( t \widehat{\Psi}(\xi/|\xi|) +1-t\big),
\end{equation}
hence the assumption $\widehat{W}>0$ on $\mathbb{S}^{d-1}$ implies that  
$\widehat{W_t} >0$ on $\mathbb{S}^{d-1}$ for all $t\in [0,1]$.
Now, we define the function $L: [0,1]\times \mathbb{M}_+ \to \R^{\frac{d(d+1)}{2}}$ as
\begin{align*}
L_{ij}(t,M):=&\gamma_{s,d}
\int_{\mathbb{S}^{d-1}}\frac{2\omega_i\omega_j \widehat{\Psi_t}(\omega)}{(M\omega\cdot\omega)^{\frac s2+1}}\,d\mathcal{H}^{d-1}(\omega) \quad\text{for }i<j,\\
L_{ii}(t,M):=&\gamma_{s,d}
\int_{\mathbb{S}^{d-1}}\frac{\omega_i^2 \widehat{\Psi_t}(\omega)}{(M\omega\cdot\omega)^{\frac s2+1}}\,d\mathcal{H}^{d-1}(\omega)-1,\\
\end{align*}
where we have used that, by symmetry, we can restrict to $i\leq j$. For fixed $t$ consider the equation 
\begin{equation}\label{LtM}
L(t,M)= 0,
\end{equation}
in the unknown $M\in \mathbb{M}_+$. Clearly, \eqref{EL1-M} is equivalent to \eqref{LtM} for $t=1$.

We define a subset $T\subseteq [0,1]$ as 
$$
T:=\left\{
t\in [0,1]: \text{there exists } M\in \mathbb{M}_+ \, \text{such that } L(t,M)=0 
\right\}.
$$
We observe that $T\neq \emptyset$, since $0\in T$. Indeed, the case $t=0$ corresponds to isotropic Riesz interactions, for which $M$ is a multiple of the identity. 

Our claim follows if we show that $1\in T$. To that aim, we will prove that $T$ is both open and closed in $[0,1]$, from which it follows that $T=[0,1].$

To show that $T$ is open in $[0,1]$, let $t_0\in T$; hence there exists $M_0\in \mathbb{M}_+$ such that $L(t_0,M_0)=0$. It is not difficult to prove that, for every $M\in \mathbb{M}_+$ and every $t\in [0,1]$, the differential of $L(t,\cdot)$ at $M$, namely the linear operator
$$
\frac{\partial L}{\partial M}(t,M): \R^{\frac{d(d+1)}{2}} \to \R^{\frac{d(d+1)}{2}}
$$
is invertible. To show this we set $B:=\frac{\partial L}{\partial M}(t,M)$ and we calculate its matrix entries.
For any $i\le j$ and $k\le l$ we define
$$A_{(i,j),(k,l)}:=\gamma_{s,d} \Big(\frac s2+1\Big) 
\int_{\mathbb{S}^{d-1}}\frac{\omega_i\omega_j\omega_k\omega_l \widehat{\Psi_t}(\omega)}{(M\omega\cdot\omega)^{\frac s2+2}}\,d\mathcal{H}^{d-1}(\omega).$$
We obtain 
$$
B_{(i,j),(k,l)}=\frac{\partial L_{ij}}{\partial M_{kl}}(t,M) = \begin{cases}
-  A_{(i,i),(k,k)} &\text{ for $i= j$ and $k=l$},\\ 
- 2A_{(i,i),(k,l)} &\text{ for $i=j$ and $k<l$},\\
- 2A_{(i,j),(k,k)} & \text{ for $i< j$ and $k=l$},\\
 - 4A_{(i,j),(k,l)} & \text{ for $i<j$ and $k<l$}.
\end{cases}
$$
The invertibility of $B$ follows from the fact that it is a negative-definite matrix. Indeed, if  $N\in \R^{\frac{d(d+1)}{2}}$ and we identify $N$ as a symmetric matrix in $\R^{d\times d}$, then
$$
B N\cdot N= -\gamma_{s,d} \Big(\frac s2+1\Big) 
\int_{\mathbb{S}^{d-1}}\frac{(N\omega\cdot\omega)^2 \widehat{\Psi_t}(\omega)}{(M\omega\cdot\omega)^{\frac s2+2}}\,d\mathcal{H}^{d-1}(\omega).
$$
The quantity above is always non-positive and it is equal to $0$ if and only if $N\omega\cdot\omega=0$ $\mathcal{H}^{d-1}$-a.e.\ on $\mathbb{S}^{d-1}$, that is, for $N=0$.

Since $\frac{\partial L}{\partial M}(t_0,M_0)$ is invertible, by the Implicit Function Theorem there exists an open set $U\subset\R$ with $t_0\in U$, and a function 
$$
\mathcal{M}: U\cap [0,1]\to \mathbb{M}_+
$$
such that $\mathcal M(t_0)=M_0$, and  $L(t,\mathcal M(t))=0$ for every $t\in U\cap [0,1]$. Hence we have found an open set $U$ such that $t_0\in U\cap[0,1] \subset T$. This proves that $T$ is open in $[0,1]$.

Finally, we prove that $T$ is closed. To this aim, let $(t_n)\subset T$ be a sequence converging to $t_0\in [0,1]$. We claim that $t_0\in T$.

First of all, since $(t_n)\subset T$, for every $n\in \mathbb{N}$ there exists $M_n\in \mathbb{M}_+$ satisfying $L(t_n, M_n)=0$, namely 
\begin{equation}\label{qqq}
a_{ij}:=\gamma_{s,d}
\int_{\mathbb{S}^{d-1}}\frac{\omega_i\omega_j \widehat{\Psi_{t_n}}(\omega)}{(M_n\omega\cdot\omega)^{\frac s2+1}}\,d\mathcal{H}^{d-1}(\omega)=\delta_{ij}.
\end{equation}
By adding the diagonal terms, we obtain the equality
\begin{equation}\label{eee}
\sum_{i=1}^da_{ii}=
\gamma_{s,d}
\int_{\mathbb{S}^{d-1}}\frac{\widehat{\Psi_{t_n}}(\omega)}{(M_n\omega\cdot\omega)^{\frac s2+1}}\,d\mathcal{H}^{d-1}(\omega)=d,
\end{equation}
whereas we have
\begin{equation}\label{eee2}
{\rm tr}\, M_n=\sum_{i,j=1}^d(M_n)_{ij}a_{ij}=
\gamma_{s,d}
\int_{\mathbb{S}^{d-1}}\frac{\widehat{\Psi_{t_n}}(\omega)}{(M_n\omega\cdot\omega)^{\frac s2}}\,d\mathcal{H}^{d-1}(\omega),
\end{equation}
where $\text{tr}\, M$ denotes the trace of the matrix $M.$
By H\"older's inequality and by \eqref{eee} we deduce that
\begin{eqnarray}\nonumber
{\rm tr}\, M_n & \leq & \gamma_{s,d}
\Big(\int_{\mathbb{S}^{d-1}}\frac{\widehat{\Psi_{t_n}}(\omega)}{(M_n\omega\cdot\omega)^{\frac s2+1}}\,d\mathcal{H}^{d-1}(\omega)\Big)^{\frac{s}{s+2}}
\Big(\int_{\mathbb{S}^{d-1}}\widehat{\Psi_{t_n}}(\omega)\,d\mathcal{H}^{d-1}(\omega)\Big)^{\frac2{s+2}}
\\
& = & \gamma_{s,d}^{\frac2{s+2}} d^{\frac{s}{s+2}}\Big(\int_{\mathbb{S}^{d-1}}\widehat{\Psi_{t_n}}(\omega)\,d\mathcal{H}^{d-1}(\omega)\Big)^{\frac2{s+2}}. \label{trace}
\end{eqnarray}
Since $\widehat{W}$ is by assumption continuous and strictly positive on $\mathbb{S}^{d-1}$, there exist $C_0, C_1>0$ such that $C_0\leq \widehat{W}=\widehat{\Psi}\leq C_1$
on $\mathbb{S}^{d-1}$, hence \eqref{hatWt} gives the bound 
\begin{equation}\label{hatWt-2}
\tilde C_0:=\min\{C_0,1\}\leq \widehat{\Psi_{t_n}} = t_n\,\widehat{\Psi}+(1-t_n)\leq \max\{C_1,1\},
\end{equation}
for every $n\in\mathbb N$. Therefore, the right-hand side of \eqref{trace} is uniformly bounded with respect to $n$.
Recall that if $M=(m_{ij})\in \mathbb{M}_+$, then $|m_{ij}|\le \text{tr}(M), \; \text{for all}\; i,j$. Hence
by compactness, up to subsequences $M_n\to M_0$, where $M_0$ is a positive semi-definite matrix. 
We now show that in fact $M_0$ is positive definite.

By letting $n\to +\infty$ in \eqref{eee}, and by Fatou's Lemma we have 
\begin{equation}\label{eee0}
\gamma_{s,d}
\int_{\mathbb{S}^{d-1}}\frac{\widehat{\Psi_{t_0}}(\omega)}{(M_0\omega\cdot\omega)^{\frac s2+1}}\,d\mathcal{H}^{d-1}(\omega)\leq d.
\end{equation}
Thus, $M_0$ cannot be identically zero.
Assume now, for contradiction, that $M_0$ is not positive definite. With no loss of generality we can assume that the $d$-th coordinate vector is an eigenvector for $M_0$ with eigenvalue $0$.
Then, for every $\omega\in \mathbb{S}^{d-1}$ 
$$
M_0\omega\cdot \omega\leq \|M_0\|_\infty \sum_{i=1}^{d-1}\omega_i^2, \qquad \|M_0\|_\infty :=\max_{i,j}(M_0)_{ij}>0.
$$
From \eqref{hatWt-2} and \eqref{eee0} we then obtain the following bound
$$
\int_{\mathbb{S}^{d-1}}\frac{1}{( \sum_{i=1}^{d-1}\omega_i^2)^{\frac s2+1}}\,d\mathcal{H}^{d-1}(\omega)\leq d\, \|M_0\|_\infty^{\frac s2+1}\,(\tilde C_0 \gamma_{s,d})^{-1},
$$ 
which leads to a contradiction, since the integral on the left-hand side diverges for $s\geq d-3$. We emphasize that this is the only point where we need the restriction $d-3\le s.$ Hence $M_0$ is positive definite. Passing to the limit in \eqref{qqq}, by the Dominated Convergence Theorem, we have 
\begin{equation*} 
\gamma_{s,d}
\int_{\mathbb{S}^{d-1}}\frac{\omega_i\omega_j \widehat{\Psi_{t_0}}(\omega)}{(M_0\omega\cdot\omega)^{\frac s2+1}}\,d\mathcal{H}^{d-1}(\omega)=\delta_{ij}.
\end{equation*}
This proves that $t_0\in T$, and that $T$ is closed, and concludes the proof of the claim.

%%%%%%%%%%%%%%%%%%%%%%%%%%%%%%%%%%%

\subsection{The second Euler-Lagrange condition \eqref{EL2}} \label{s:EL2}

In this subsection we show that the first Euler-Lagrange condition \eqref{EL1} implies the second one \eqref{EL2}. This part of the proof only requires $\widehat{W}\geq 0$ on $\mathbb{S}^{d-1}$, and $s> d-4$ rather than $s\geq d-3$, see Remark \ref{remark:stronger}.

Our argument is  considerably more involved in the case $s\in [d-3,d)\cap(0,3)$ than in the case  $s\in[d-3,d)\cap[3,5].$

\subsubsection{The second Euler-Lagrange condition for $s\in [d-3,d)\cap(0,3)$}

Let $E$ be an ellipsoid such that the corresponding measure $\mu_E$ satisfies the first Euler-Lagrange condition, namely let $C\in \R$ be such that for every $x\in  E$ 
\begin{align}\label{elpri}
C=P_E(x) = (W\ast\mu_E)(x)+\frac{|x|^2}2
=\int_{\mathbb{S}^{d-1}} g_s(\omega)(1-s\alpha^2(x,\omega))\,d\mathcal{H}^{d-1}(\omega)+\frac{|x|^2}2,
\end{align}
where the function $g_s$ is defined in \eqref{def-g}. 
Since $0\in E$, we obtain the conditions 
\begin{equation}\label{bla}
\begin{cases}
\displaystyle
C=P_E(0)=\int_{\mathbb{S}^{d-1}} g_s(\omega)\,d\mathcal{H}^{d-1}(\omega),\\
\displaystyle 
\frac{|x|^2}2=s \int_{\mathbb{S}^{d-1}} g_s(\omega)\alpha^2(x,\omega)\,d\mathcal{H}^{d-1}(\omega)
\quad \text{for every } x\in  E.
\end{cases} 
\end{equation}
Note that the second identity in \eqref{bla} holds for each $x\in 	\mathbb{R}^d,$ since the two members are quadratic polynomials in $x$ that coincide on $ E$. By \eqref{potential} and \eqref{bla}, using \eqref{kappa-s-0}--\eqref{a-sF}, we have, for each $x\in E^c,$
\begin{equation}\label{potex}
\begin{split}
P_E(x) = & \int_{\mathbb{S}^{d-1}} g_s(\omega)(1-s\,\alpha^2(x,\omega))\,d\mathcal{H}^{d-1}(\omega)+\frac{|x|^2}2\\*[7pt]
&+  \int_{\mathbb{S}^{d-1}} g_s(\omega)\chi_{[-1,1]^c}(\alpha(x,\omega))\bigg(
s\alpha^2(x,\omega)-1+ \kappa_s f_s(\alpha(x,\omega))
\bigg)\,d\mathcal{H}^{d-1}(\omega)\\*[7pt]
=& \ P_E(0) +  \int_{\mathbb{S}^{d-1}} g_s(\omega)\chi_{[-1,1]^c}(\alpha(x,\omega))\bigg(
s\alpha^2(x,\omega)-1+ \kappa_s f_s(\alpha(x,\omega))
\bigg)\,d\mathcal{H}^{d-1}(\omega).
\end{split}
\end{equation}
It is convenient to write $\kappa_s=K(s)\cos\left(\tfrac{\pi s}{2}\right)$, 
where $K(s):=\frac{2^{-s+1}\Gamma(s)}{\Gamma\big(\tfrac s2+2\big)\Gamma\big(\tfrac s2\big)}$. 

We claim that for $|\alpha|>1$
\begin{equation}\label{claimEL2}
F(\alpha,s):=s\alpha^2-1+K(s) \cos\left(\frac{\pi s}2\right) f_s(\alpha)
\geq 0.
\end{equation}
Since $g_s\geq 0$, this inequality implies the second Euler-Lagrange condition for $s\in [d-3,d)\cap (0,3)$. Inequality \eqref{claimEL2} is clearly true for $s=1$ (which belongs to the range of $s$ considered here if $d<5$), since $F(\alpha,1)=\alpha^2-1$. 
So, in what follows we implicitely assume $s\neq1$. Note that, since $F$ is even in the variable $\alpha$, it is sufficient to prove \eqref{claimEL2} for $\alpha>1$.

To prove \eqref{claimEL2}, we introduce the function
$$
\phi(z):= z^{-\frac{s}2} \, {}_2F_1\left(\frac s2,\frac{s+1}2;\frac s2+2;z^{-1}\right)
\qquad\text{for}\ z\in (1,\infty)
$$
(for simplicity, the dependence on $s$ is not reflected in the notation), and rewrite  \eqref{claimEL2} in terms of $\phi$. According to \eqref{limz1}, $\phi$ extends continuously to $z=1$ and
$$
\phi(1)=\frac{\Gamma(\tfrac s2+2)\Gamma(\tfrac{3-s}2)}{\Gamma(2)\Gamma(\tfrac 32)} =\frac{2}{\sqrt \pi} \Gamma(\tfrac s2+2)\Gamma(\tfrac{3-s}2),
$$
since $\Gamma(2)=1$ and $\Gamma(\tfrac32)=\frac{\sqrt{\pi}}2$. By properties (2) and (4) of the Gamma function recalled in Section~\ref{sect:propG} we can write 
\begin{equation}\label{gamma-gamma}
\Gamma(\tfrac{3-s}2)=(\tfrac{1-s}2)\Gamma(\tfrac{1-s}2), 
 \qquad \frac{\Gamma(s)}{\Gamma(\tfrac s2)}=\frac{2^{s-1}}{\sqrt \pi}{\Gamma(\tfrac{s+1}2)},
\end{equation}
so that 
$$
K(s)\phi(1) = \frac1\pi(1-s)\Gamma(\tfrac{s+1}2)\Gamma(\tfrac{1-s}2) .
$$
Since by property (3) in Section~\ref{sect:propG} we have
\begin{equation}\label{gamma-gamma2}
\Gamma(\tfrac{s+1}2)\Gamma(\tfrac{1-s}2)=\frac{\pi}{\sin(\tfrac{\pi}2(s+1))} = \frac{\pi}{\cos(\tfrac{\pi s}2)},
\end{equation}
we deduce that 
\begin{equation}
	\label{eq:constant}
	K(s) \cos\left(\frac{\pi s}2\right) = \frac{1-s}{\phi(1)}.
\end{equation}
Consequently, writing $z$ instead of $\alpha^2$, the claimed inequality \eqref{claimEL2} can be written as
\begin{equation}
	\label{eq:claimEL2alt}
	s z - 1 + \frac{1-s}{\phi(1)} \, \phi(z) \geq 0
	\qquad\text{for all}\ z> 1.
\end{equation}
To prove it, we distinguish the cases $s<1$ and $s>1$.

\subsubsection*{Case $s\in[d-3,d)\cap(1,3)$}
By Lemma~\ref{hyper} the function $\phi$ is non-increasing. Therefore, $\phi(z)\leq \phi(1)$ for all $z\geq 1$. 
Since $\cos(\pi s/2)<0$ and $K(s)>0$, by \eqref{eq:constant} we have $(1-s)/\phi(1)<0$, hence
$$
s z - 1 + \frac{1-s}{\phi(1)} \phi(z) \geq sz - 1 + \frac{1-s}{\phi(1)} \phi(1) = s(z-1) \geq 0
\qquad\text{for all}\ z\geq 1,
$$
which proves \eqref{eq:claimEL2alt}.

\subsubsection*{Case $s\in[d-3,d)\cap(0,1)$}
By Lemma \ref{hyper} the function $\phi$ is convex. By \eqref{der:hyp2}, \eqref{limz1}, and the properties of the Gamma function recalled in Section~\ref{sect:propG}, 
$\phi'$ extends continuously to $z=1$ and 
$$
\phi'(1) = - \frac{s}{1-s}\phi(1).
$$
By convexity $\phi(z) \geq \phi(1) + \phi'(1)(z-1)$ for all $z\geq 1$. 
Since $\cos(\pi s/2)>0$ and $K(s)>0$, by \eqref{eq:constant} we have $(1-s)/\phi(1)>0$, hence 
$$
s z - 1 + \frac{1-s}{\phi(1)} \phi(z) \geq sz - 1 + \frac{1-s}{\phi(1)}\left( \phi(1) + \phi'(1)(z-1)\right) = \left( s + \frac{1-s}{\phi(1)}\,\phi'(1) \right)(z-1)=0
$$
for all $z\geq1$, which proves \eqref{eq:claimEL2alt}.\medskip

This concludes the proof of \eqref{claimEL2} and therefore of the second Euler-Lagrange equation for $s\in [d-3,d)\cap (0,3)$.

\subsubsection{The second Euler-Lagrange condition for $s\in [d-3,d)\cap[3,5]$}\label{sub322}
Let $E$ be an ellipsoid such that the corresponding measure $\mu_E$ satisfies the first Euler-Lagrange condition 
\eqref{elpri}. As in the previous subsection, using \eqref{elpri}, \eqref{potentialG} and \eqref{pot5}, we get 
\begin{equation*}\label{}
\begin{split}
&P_E(x)  \\*[7pt]
& =\ P_E(0) +  \int_{\mathbb{S}^{d-1}} g_s(\omega)\big(
s\alpha^2(x,\omega)-1\big) \chi_{[-1,1]^c}(\alpha(x,\omega))\,d\mathcal{H}^{d-1}(\omega) +G_s(x).
\end{split}
\end{equation*}
Since the second and third terms in the right hand side are non-negative the second Euler-Lagrange condition follows.

\begin{remark}\label{remark:stronger}
In subsection \ref{s:EL2} we have shown that the first Euler-Lagrange condition implies the second one. This argument uses the assumption $s>d-4$ (in order for the measure $\mu_E$ to be well-defined), but it does not rely on the assumption $s\geq d-3$ (which was used to find an ellipsoid for which the first Euler-Lagrange condition is satisfied). We also note that this step of the proof only requires $\widehat{W}\geq 0$ on $\mathbb{S}^{d-1}$.
\end{remark}

%%%%%%%%%%%%%%%%%%%%%%%%%%%%%%%%%%%%%%%%%%%%
%%%%%%%%%%%%%%%%%%%%%%%%%%%%%%%%%%%%%%%%%%%%

\section{The loss of dimension in the degenerate case}\label{lossdimsec}

In this section we consider the degenerate case where the Fourier transform of the anisotropic potential is non-negative on $\mathbb{S}^{d-1}$, but not strictly positive.

More precisely, let $W_0$ be a potential satisfying the assumptions of Theorem~\ref{chara}, with $\widehat{W_0}\geq 0$ on $\mathbb{S}^{d-1}$, but not strictly positive. Let $\Psi_0$ denote its profile, as in \eqref{potdef}, and $I_0$ the corresponding energy, as in \eqref{en:general}. For $\varepsilon>0$, we `lift' the potential $W_0$ by setting, for $x\neq 0$, 
$$
W_\varepsilon(x):=\frac{1}{|x|^s}\left(\Psi_0\left(\frac{x}{|x|}\right)+ \frac{\varepsilon}{b_{s,d}}\right), 
$$
where $b_{s,d}:=b_{0,s,d}$ is defined in \eqref{hatW} so that the Fourier transform of $x\mapsto \frac{1}{|x|^s}$ is $\xi\mapsto\frac{b_{s,d}}{|\xi|^{d-s}}$. Then, since $b_{s,d}>0$ we still have that $W_\varepsilon>0$ on $\mathbb{S}^{d-1}$, and moreover 
$$
\widehat{W_\varepsilon} = \widehat{W_0}+\varepsilon>0 \qquad \text{ on } \mathbb{S}^{d-1}.
$$
Let $I_\varepsilon$ denote the energy as in \eqref{en:general}, with potential $W_\varepsilon$. By Theorem~\ref{chara}, the minimiser $\mu_\varepsilon$ of $I_{\varepsilon}$ is as in \eqref{thm:min}, with $a^\varepsilon_i>0$. 
As in the proof of Proposition~\ref{ex+uniq}, one can show that the sequence $(\mu_\varepsilon)_\varepsilon$ is tight, hence, up to subsequences, $\mu_\varepsilon$ converges in the narrow sense to some measure $\mu_0\in \mathcal{P}(\R^d)$, as $\varepsilon\to 0^+$. Moreover, it is easy to show that $\mu_0$ is the unique minimiser of $I_0$. We can characterise $\mu_0$ in terms of the limit $\bar a\in [0,+\infty)^{d}$ of $(a^\varepsilon)_\varepsilon$ as follows.

We note that $\bar a$ cannot be identically $0$, since $\mu_0$ would coincide with $\delta_0$ and $I_0(\delta_0)=W_0(0)=+\infty$, hence $\delta_0$ cannot be the minimiser of $I_0$. 
If $\bar a_i>0$ for any $i=1,\ldots,d$, then the minimiser $\mu_0$ is of the form \eqref{thm:min} and its support is fully dimensional.
Otherwise, let us 
assume that $\bar a$ has only $k\in \{1,\dots, d-1\}$ strictly positive components, and denote by $\bar a(k)\in \R^d$ a vector with the same components as $\bar a$, but rearranged so that $\bar a(k)_i>0$ for $i=1,\dots,k$, and $\bar a(k)_i=0$ for $i=k+1,\dots,d$.
Then $\mu_0(x)=\mu_{E(\bar a(k))}(R^Tx)$ for a suitable $R\in SO(d)$, where we set
\begin{equation*} 
{E}(\bar a(k)):=\left\{(x_1,\dots x_k)\in \R^k:\ \sum_{i=1}^k\frac{x_i^2}{\bar a(k)_i^2} \leq 1\right\},
\end{equation*}
and 
\begin{equation}\label{meas-k}
\mu_{E(\bar a(k))}(x)= \frac{\tilde c_{s,d,k}}{\Pi_{i=1}^k(\bar a(k)_i/r_d)}\bigg(1-\sum_{i=1}^k\frac{x_i^2}{\bar a(k)_i^2} \bigg)^{\!\frac{s+2-k}2}\!\!\!\!\!\!\!\!\chi_{ {E}(\bar a(k))}(x_1,\dots,x_k)\otimes \delta_0(x_{k+1},\dots,x_d).
\end{equation}
In \eqref{meas-k} the constant $\tilde c_{s,d,k}$ is an explicit normalisation constant and $r_d>0$ is the same as in \eqref{thm:min}.

We now discuss the possible minimisers of $I_0$ in terms of the dimension of their supports, depending on the homogeneity $s$ of the potential. 

Note that $I_0$  is bounded from below by a positive multiple of the isotropic Riesz energy $I_{\textrm{iso}}$, corresponding to $W_{\textrm{iso}}(x)=1/|x|^s$. For the isotropic Riesz energy we have  
\begin{multline*}
I_{\textrm{iso}}(\mu_{E(\bar a(k))})\\\geq \frac{\tilde c_{s,d,k}}{\Pi_{i=1}^k(\bar a(k)_i/r_d)} \int_{ {E}(\bar a(k))}\!\!\!\bigg(\int_{\R^d} \frac{1}{|x(k)-y|^s}d\mu_{E(\bar a(k))}(y)\bigg) \bigg(1-\sum_{i=1}^k\frac{x_i^2}{\bar a(k)_i^2} \bigg)^{\frac{s+2-k}2}\!\!\!\!\!\!\! dx(k).
\end{multline*}
Moreover, for any $(x_1,\dots,x_k)\in  E(\bar a(k))$
\begin{align*}
&\int_{\R^d} \frac{1}{|x(k) -y|^s}d\mu_{E(\bar a(k))}(y)\\
&\hspace{3cm}=\frac{\tilde c_{s,d,k}}{\Pi_{i=1}^k(\bar a(k)_i/r_d)} \int_{ {E}(\bar a(k))} \frac{1}{|x(k)-y(k)|^s} \bigg(1-\sum_{i=1}^k\frac{y_i^2}{\bar a(k)_i^2} \bigg)^{\frac{s+2-k}2}\!\!\!\!\!\!\! dy(k),
\end{align*}
which is equal to $+\infty$ for $s\geq k$. In other words, 
$$
I_0(\mu_{E(\bar a(k))})=+\infty \quad \text{for } s\geq k.
$$
This means that for $s\geq k$ the minimiser $\mu_0$ of $I_0$ cannot be supported on a $k$-dimensional set, and its support must be at least $(k+1)$-dimensional. In particular, the minimiser is fully dimensional, and given by \eqref{thm:min}, if $s\in [d-1,d)$. If, however, e.g., $s\in [d-2,d-1)$, then the minimiser must be supported on a set of dimension at least $d-1$, so there may or may not be a loss of dimension.

We recall that for Coulomb interactions $s=d-2$ in three dimensions, in \cite[Example 3.4]{MMRSV-3d} a potential $W_0$ is constructed such that the corresponding minimiser is supported on a two-dimensional ellipse, so the loss of dimensionality of the minimiser can in fact occur.

We also remark that the loss of dimensionality seems to be related to properties of $\widehat{W_0}$, rather than of $W_0$. It was in fact shown in \cite{Mora-Muenster}, for $s=d-2$, that if the minimiser has a $(d-1)$-dimensional support, then the normal to the hyperplane containing the support has to be a direction of degeneracy for $\widehat{W_0}$.

\bigskip

\noindent
\textbf{Acknowledgements.} 
RLF was partially support through US National Science Foundation grant DMS-1954995, as well as through the German Research Foundation through EXC-2111-390814868 and TRR 352-Project-ID 470903074. 
JM and JV have been partially supported
by 2021SGR-00071 (Catalonia) and PID2024-155320NB-I00  (Mineco, Spain).
MGM is member of GNAMPA--INdAM. 
MGM acknowledges support from PRIN 2022 (Project no. 2022J4FYNJ), funded by MUR, Italy, and the European Union -- Next Generation EU, Mission~4 Component~1 CUP~F53D23002760006.
LR is supported by the Italian MUR through the PRIN 2022 project n.2022B32J5C, under the National Recovery and Resilience Plan (PNRR), Italy, funded by the European Union  - Next Generation EU, Mission 4 Component 1 CUP~F53D23002710006, and by GNAMPA-INdAM through 2025 projects. 
LS acknowledges support by the EPSRC under the grants EP/V00204X/1 and EP/V008897/1. 
Part of this work was done during a visit of JM, MGM, LR, and JV to Heriot-Watt University, whose kind hospitality is gratefully acknowledged.

\end{document}